\newtheorem{theorem}{Theorem}[section]
\newtheorem{lemma}[theorem]{Lemma}
\newtheorem{corollary}[theorem]{Corollary}
\theoremstyle{definition}
\theoremstyle{remark}
\numberwithin{equation}{section}
\begin{document}

\title[Inverse Gauss curvature flow and $p$-capacitary Orlicz-Minkowski problem]{ An inverse Gauss curvature flow and its application to $p$-capacitary Orlicz-Minkowski problem }


\author{Bin Chen}
\address{Bin Chen, Xia Zhao, and Peibiao Zhao \newline \indent School of Mathematics and Statistics, Nanjing University of Science and Technology, Nanjing, China}
\curraddr{}
\email{chenb121223@163.com}
\email{zhaoxia20161227@163.com}
\email{pbzhao@njust.edu.cn}
\thanks{}

\author{Weidong Wang}
\address{Weidong Wang \newline \indent Three Gorges Mathematical Research Center, China Three Gorges University, Yichang, China}
\curraddr{}
\email{wangwd722@163.com}
\thanks{Supported  by the Natural Science
Foundation of China (No.12271254; 12141104)}

\author{Xia Zhao}
\address{}
\curraddr{}
\email{}
\thanks{}

\author{Peibiao Zhao}
\address{}
\curraddr{}
\email{}
\thanks{Corresponding author: Peibiao Zhao}

\subjclass[2020]{52A20\ \ 35J25\ \ 35K96}

\keywords{ Orlicz Minkowski problem; inverse Gauss curvature flow; $p$-capacity; boundary value problem}

\date{}

\dedicatory{}

\begin{abstract}
  In [Calc. Var., 57:5 (2018)], Hong-Ye-Zhang proposed the $p$-capacitary Orlicz-Minkowski problem and proved the existence of convex solutions to this problem by variational method for $p\in(1,n)$.
  However, the smoothness and uniqueness of solutions are still open.

  Notice that the $p$-capacitary Orlicz-Minkowski problem can be converted equivalently to a Monge-Amp\`{e}re type equation in smooth case:
  \begin{align}\label{0.1}
  f\phi(h_K)|\nabla\Psi|^p=\tau G
  \end{align}
  for $p\in(1,n)$ and some constant $\tau>0$, where $f$ is a positive function defined on the unit sphere $\mathcal{S}^{n-1}$, $\phi$ is a continuous positive function defined in $(0,+\infty)$, and $G$ is the Gauss curvature.

  In this paper, we confirm the existence of smooth solutions to $p$-capacitary Orlicz-Minkowski problem  with $p\in(1,n)$ for the first time by  a class  of inverse Gauss curvature flows, which  converges smoothly to the solution of Equation (\ref{0.1}).
  Furthermore, we prove the uniqueness result for Equation (\ref{0.1}) in a special case.
\end{abstract}

\maketitle

\section{ Introduction}
The classical Brunn-Minkowski-theory (abbreviated as BMT) of convex bodies (i.e., a compact, convex sets) in $n$-dimensional Euclidean spaces $\mathbb{R}^n$ plays an important role in the study of convex geometric analysis and develops rapidly in recent years.
The classical Minkowski problem is one of the main cornerstones of the classical BMT (one can see \cite{G,S} for details).
This problem  is to find a convex body $K$ in $\mathbb{R}^n$ with the prescribed surface area measure $S(K,\cdot)$, which is induced by the volume variation, that is, for each convex body $L$, there holds
\begin{align}\label{1.0}
\frac{d}{dt}V(K+tL)\bigg|_{t=0}=\int_{\mathcal{S}^{n-1}}
h(L,\cdot)dS(K,\cdot),
\end{align}
where $K+tL=\{x+ty: x\in K, y\in L\}$ is the Minkowski sum, $\mathcal{S}^{n-1}$ is the unit sphere, and $h(L,\cdot)$ is the support function of convex body $L$ in $\mathbb{R}^n$.

With the development of the classical BMT, it has inspired many other theory of a similar nature. Examples include the $L_{\mathfrak{p}}$ BMT, Orlicz BMT and their dual theory. Of course, the related Minkowski-type problems are naturally important research contents, see e.g., \cite{CH,GHW,HLY,HH,Hu,Li,L,LY,LYZ,XH} and the references
therein.

The Minkowski type problem for the measure associated with the solution to the boundary-value problem is doubtless an extremely important variant, as some typical examples,
we refer to seminal papers \cite{CF,J} on {\it capacity} and {\it torsional rigidity} by Jerison and Colesanti-Fimiani and subsequent progress, e.g., \cite{CD,CNS,FZH,HYZ,WH,XX,ZX1}.

In the present paper, we will further study the $p$-capacitary Minkowski problem for Orlicz case proposed by Hong-Ye-Zhang \cite{HYZ}.
To describe this type of problem, we first recall the definition of the $p$-capacity functional and its variational formula.

For $p\in(1,n)$, the electrostatic $p$-capacity of a convex body $K$ in $\mathbb{R}^n$ is described by (see \cite{CNS})
$$C_p(K)=\inf\bigg\{\int_{\mathbb{R}^n}|\nabla\psi|^pdx: \psi\in C_c^\infty(\mathbb{R}^n),\ \ \psi\geq1\ on\ K\bigg\},$$
where $C_c^{\infty}(\mathbb{R}^n)$ denotes the set of all infinitely differentiable functions with compact support in $\mathbb{R}^n$, and $\nabla\psi$ denotes the gradient of $\psi$. The geometric quantity $C_2(K)$ is the classical electrostatic (or Newtonian) capacity of $K$ (see \cite{J}).

Let $K$ be a convex body and $p\in(1,n)$. The $p$-equilibrium potential $\Psi$ of $K$ is the unique solution to the following boundary value problem (see \cite{LN})
\begin{align}\label{1.1}
\begin{cases}
\triangle_p\Psi=0\ \ \ \ in\ \mathbb{R}^n\backslash K,\\
\Psi=1\ \ \ \ \ on\ \partial K,\\
\Psi(x)\rightarrow0\ \ \ \ as\ |x|\rightarrow\infty,
\end{cases}
\end{align}
where
$$\triangle_p\Psi=div(|\nabla\Psi|^{p-1}\nabla\Psi)$$
is the $p$-Laplace operator.

Similar to the volume variational formula (\ref{1.0}),
Colesanti et al (\cite{CNS}) established the variational formula for $p$-capacity as follows:
For two convex bodies $K, L$ and $p\in(1,n)$, then
\begin{align}\label{1.2.0}
\frac{d}{dt}C_p(K+tL)\bigg|_{t=0}=(p-1)
\int_{\mathcal{S}^{n-1}}
h(L,\xi)d\mu_p(K,\xi),
\end{align}
and the Poincar\'{e} $p$-capacity formula
\begin{align}\label{1.2.1}
C_p(K)=\frac{p-1}{n-p}\int_{\mathcal{S}^{n-1}}
h(K,\xi)d\mu_p(K,\xi),
\end{align}
where $\mu_p(K,\cdot)$ is a finite Borel measure on $\mathcal{S}^{n-1}$, called the electrostatic $p$-capacitary measure of $K$, defined by
\begin{align}\label{1.2}
\mu_p(K,\eta)=\int_{g_K^{-1}(\eta)}
|\nabla\Psi|^pd\mathcal{H}^{n-1}
=\int_{\eta}|\nabla\Psi|^pdS(K,\cdot),
\end{align}
for each Borel set $\eta\subset\mathcal{S}^{n-1}$,
where $g_K$ is the Gauss map, and $\mathcal{H}^{n-1}$ is the $(n-1)$-dimensional Hausdorff measure.

Naturally, the {\it $p$-capacitary Minkowski problem} can be proposed as: Let $\mu$ be a finite Borel measure on $\mathcal{S}^{n-1}$ and $p\in(1,n)$.
Under what necessary and sufficient conditions does there exist a (unique) convex body $K$ in $\mathbb{R}^n$ such that
$$d\mu_p(K,\cdot)=d\mu?$$
When $p=2$, this problem has been solved in the seminal paper by Jerison \cite{J}.
A convex solution of this problem for $p\in(1,2)$ was obtained in \cite{CNS}.
The case of all $p\in(1,n)$ was recently solved by Akman et al in their groundbreaking work \cite{AG}.

As an extension of the $p$-capacitary Minkowski problem, the Orlicz case was introduced by Hong et al \cite{HYZ}.
It is stated as: under what conditions on a given function $\phi$ and a given Borel measure $\mu$ defined on $\mathcal{S}^{n-1}$, one can find a convex body $K$ in $\mathbb{R}^n$ such that
\begin{align}\label{1.3}
\tau\phi(h(K,\cdot))d\mu_p(K,\cdot)=d\mu,
\end{align}
for some constant $\tau>0$?

For this problem, Hong et al solved the existence of solutions with $p\in(1,n)$ for both discrete and general measures under some mild conditions.
When $\phi(h)=h^{1-\mathfrak{p}}$ in (\ref{1.3}) for $\mathfrak{p}\in\mathbb{R}$, it is the $L_{\mathfrak{p}}$ $p$-capacitary Minkowski problem introduced in \cite{ZX1}. There are many results for different ranges of $\mathfrak{p}$ and $p$, for instance, when $p\in(1,n)$ and $\mathfrak{p}\in(1,\infty)$, the even convex solution was obtained in \cite{ZX1}.
When $p\in(1,2)$, $\mathfrak{p}\in(0,1)$ and $p\geq n$, $\mathfrak{p}\in(0,1)$, the polytopal solutions are proved in \cite{XX} and \cite{LX}, respectively. Feng et al \cite{FZH} supplemented the case of $\mathfrak{p}\in(0,1)$ and $p\in(1,n)$ for general measures. When $\mathfrak{p}=0$ and $p\in(1,n)$ in (\ref{1.3}), it is the logarithmic Minkowski problem for $p$-capacity, and its polytopal solution is proved in \cite{WH}.

It is worth noting that the smoothness of solutions to the Minkowski-type problems has always been an important  issue.
To the best of our knowledge, the smooth solutions of the $p$-capacitary Minkowski-type problems have not been resolved.

Motivated by the celebrated works and the related topics stated above, we thus in the present paper try to investigate and confirm the existence of the non-symmetric smooth solutions to the $p$-capacitary Orlicz-Minkowski problem. One of the main methods used in this paper to solve this problem is the so-called  inverse Gauss curvature flow method.

A basic idea of using inverse Gauss curvature flow to solve the $p$-capacitary Orlicz-Minkowski problem (\ref{1.3})
 can be summarized as follows.

1)  The  $p$-capacitary Orlicz-Minkowski problem (\ref{1.3}) can be converted to a Monge-Amp\`{e}re type equation equivalently in smooth case below (\cite{HYZ})
\begin{align}\label{1.4}
f\phi(h_K)|\nabla\Psi(g^{-1}_K)|^p=\tau G,
\end{align}
for $p\in(1,n)$ and some constant $\tau>0$, the smooth data function $f: \mathcal{S}^{n-1}\rightarrow(0,\infty)$, and $G$ is the Gauss curvature (see Section 2 for details). In this case, the key of the present paper is to find a convex body $K$ in $\mathbb{R}^n$ with the support function $h_K$ satisfying (\ref{1.4}).

2) The solution to the Monge-Amp\`{e}re type equation (\ref{1.4}) follows the limit of solutions of inverse Gauss curvature flows (\ref{1.5}) constructed below.

The Gauss curvature flow was first introduced and studied by Firey \cite{F} to model the shape change of worn stones.  Since then, many scholars have found that using curvature flow to study the hypersurfaces is a very effective tool, such as solving the Minkowski-type problems and geometric inequalities in convex geometric analysis etc., see e.g., \cite{AC,BIS,CH,CWX,CW,HL,HLW,LWX,LL} and the references therein.

Let $\Omega_0$ be a smooth, closed, and strictly convex hypersurface in $\mathbb{R}^n$ enclosing the origin $o$ in its interior,
that is, there is a sufficient small positive constant $\delta_o$ such that the $\delta_0$-neighbourhood of $o$ being with  $U(o,\delta_o)\subset \Omega_0$.
We consider an inverse Gauss curvature flow of a family of convex hypersurfaces $\{\Omega_t\}$ given by $\Omega_t=F(\mathcal{S}^{n-1},t)$, where $F: \mathcal{S}^{n-1}\times[0,T)\rightarrow\mathbb{R}^n$ is a smooth map satisfying
\begin{align}\label{1.5}
\begin{cases}
\frac{\partial F(\xi,t)}{\partial t}=f(\nu)(F\cdot\nu)\phi(F\cdot \nu)|\nabla\Psi(F,t)|^p\sigma_{n-1}\nu-\gamma(t)F(\xi,t),\\
F(\xi,0)=F_0(\xi),
\end{cases}
\end{align}
where $f$ is a given positive smooth function on $\mathcal{S}^{n-1}$,  $``\cdot"$ is the standard inner product in $\mathbb{R}^n$, $\sigma_{n-1}(\xi,t)$ is the product of the principal curvature radii with $\sigma_{n-1}=\det(\nabla_{ij}h+h\delta_{ij})$, $\nu$ is the out normal of $\Omega_t$ at $F(\xi,t)$,
$T$ is the maximal time for which the solution of (\ref{1.5}) exists, and the scalar function $\gamma(t)$ is given by
\begin{align*}
\gamma(t)=\frac{n-p}{p-1}\frac{C_p(\Omega_t)}
{\int_{\mathcal{S}^{n-1}}h/(f\phi(h))d\xi},
\end{align*}
for $p\in(1,n)$, which is used in Section 3 to investigate two important geometric quantities.

Compared with the geometric flows in \cite{BIS,CH,CWX,CW,LL}, the flow we construct in the present paper is more complex due to its inclusion of functions $\phi$, $|\nabla\Psi|$ and $\gamma(t)$, which is reflected in the fact that priori estimates are more difficult to obtain.

We now present the main results of this paper.

\begin{theorem}\label{t1.2}
Let $f$ be a positive smooth function on $\mathcal{S}^{n-1}$, and $\Omega_0$ be a smooth, closed and strictly convex hypersurface in $\mathbb{R}^n$ enclosing the origin in its interior.
Suppose

1) $p\in(1,n)$;

2) function $\phi: (0,\infty)\rightarrow(0,\infty)$ is smooth;

3) $\varphi(s)=\int_0^s1/\phi(t)dt$ exists for all $s>0$ and $\lim_{s\rightarrow\infty}\varphi(s)=\infty$.

\noindent Then, the flow (\ref{1.5}) has a smooth solution $\Omega_t$ for all time $t>0$.
When $t\rightarrow\infty$, there is a subsequence of $\Omega_t$ that converges in $C^\infty$ to a smooth, closed and strictly convex hypersurface $\Omega_\infty$ whose support function satisfies (\ref{1.4}).
\end{theorem}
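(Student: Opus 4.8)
The plan is to follow the standard curvature-flow strategy for Minkowski-type problems, adapted to the extra features (the potential term $|\nabla\Psi|^p$, the Orlicz function $\phi$, and the normalization $\gamma(t)$). First I would reformulate the flow (\ref{1.5}) as a scalar parabolic PDE for the support function $h(\xi,t)$ of $\Omega_t$. Projecting the evolution equation onto the normal direction and using $F\cdot\nu = h$ together with $\sigma_{n-1}=\det(\nabla_{ij}h+h\delta_{ij})$, the flow becomes a Monge-Amp\`{e}re type equation of the form
\begin{align*}
\frac{\partial h}{\partial t}=f(\xi)\,\phi(h)\,|\nabla\Psi|^p\,h\,\sigma_{n-1}(\xi,t)-\gamma(t)h,
\end{align*}
where $|\nabla\Psi|$ is evaluated at the corresponding boundary point. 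Short-time existence then follows from the standard parabolic theory for such equations since at $t=0$ the right-hand side is uniformly parabolic (strict convexity of $\Omega_0$ gives $\sigma_{n-1}>0$). The role of $\gamma(t)$ is to make the flow preserve a scale-invariant geometric quantity: I would check that $\gamma$ is chosen exactly so that $C_p(\Omega_t)$ stays constant along the flow, using the Poincar\'{e} $p$-capacity formula (\ref{1.2.1}) and the variational formula (\ref{1.2.0}); this is the analogue of preserving volume. A second monotone quantity, something like $\int_{\mathcal{S}^{n-1}}\varphi(h)/f\,d\xi$ (with $\varphi$ as in hypothesis (3)), should be monotone and bounded, providing the functional whose critical points solve (\ref{1.4}).

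The core of the argument is the a priori estimates, carried out on a fixed time interval $[0,T)$ and shown to be uniform in $T$. I would establish, in order: (i) $C^0$ bounds, i.e. $1/c\le h(\xi,t)\le c$ uniformly — the upper bound from the preserved $p$-capacity plus the monotone functional controlling $h$ from above via $\varphi\to\infty$, and the lower bound by a reflection/support argument or by showing the origin stays enclosed with a definite margin (this is where $U(o,\delta_o)\subset\Omega_0$ and the structure of $\gamma$ enter); (ii) gradient bounds $|\nabla h|\le c$, typically from the $C^0$ bounds together with convexity and a maximum-principle argument applied to $|\nabla h|^2$; (iii) bounds on $|\nabla\Psi|$ and its derivatives on $\partial\Omega_t$ — here one invokes regularity theory for the $p$-Laplace equation (\ref{1.1}) on the exterior domain, with constants depending only on the $C^2$ geometry already controlled, so this must be interleaved carefully with the next step; (iv) upper and lower bounds on the principal curvatures, equivalently two-sided bounds on $\sigma_{n-1}$ and on the eigenvalues of $(\nabla_{ij}h+h\delta_{ij})$, via a maximum principle applied to the largest principal curvature (or to $\log\sigma_{n-1}$ minus suitable terms), which upgrades uniform parabolicity to a time-independent constant. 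Once (i)--(iv) give uniform $C^2$ bounds with uniform ellipticity, the equation becomes uniformly parabolic with bounded coefficients, and Krylov-Safonov plus Schauder estimates bootstrap to uniform $C^\infty$ bounds on $h(\cdot,t)$ for all $t$; in particular $T=\infty$.

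For the convergence as $t\to\infty$: by the uniform $C^\infty$ bounds and Arzel\`{a}-Ascoli, every sequence $t_k\to\infty$ has a subsequence along which $\Omega_{t_k}\to\Omega_\infty$ in $C^\infty$ to a smooth strictly convex hypersurface. To identify the limit, I would use the monotone functional: since $\int_{\mathcal{S}^{n-1}}\varphi(h)/f\,d\xi$ is monotone and bounded, its time derivative is integrable on $[0,\infty)$, forcing $\partial_t h\to 0$ along a subsequence; this kills the left-hand side of the scalar flow equation and yields
\begin{align*}
f(\xi)\,\phi(h_\infty)\,|\nabla\Psi_\infty|^p\,\sigma_{n-1}(\Omega_\infty)=\gamma_\infty,
\end{align*}
which after rewriting $\sigma_{n-1}=1/G$ is precisely (\ref{1.4}) with $\tau=\gamma_\infty$, a positive constant because the $p$-capacity is preserved and positive. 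The main obstacle I expect is step (iv) together with the coupling to (iii): controlling the principal curvatures requires differentiating the flow equation twice, and the term $|\nabla\Psi|^p$ is nonlocal — its derivatives along $\Omega_t$ depend on the global geometry through the exterior $p$-Laplace problem — so one cannot treat it as a pointwise lower-order term. The resolution is to first obtain $C^1$-type bounds on $|\nabla\Psi|$ on $\partial\Omega_t$ depending only on the already-established $C^0$ and $C^1$ bounds of $h$ (standard for $p$-harmonic functions with convex obstacle and controlled geometry), feed these into the curvature maximum principle as \emph{bounded} coefficients, and only afterwards close the loop for the second derivatives of $\Psi$; getting this ordering right, and checking that all constants depend only on $\delta_o,\Omega_0,f,\phi,p$ and not on $T$, is the delicate part of the proof.
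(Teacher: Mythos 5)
Your overall roadmap (scalar evolution equation for $h$, a priori estimates in $C^0$, $C^1$, $C^2$, bootstrap via Krylov--Safonov/Schauder, Arzel\`{a}--Ascoli, identify the limit via a monotone quantity) is the same as the paper's, and your observation about the delicate interaction between the $C^2$ estimate and the nonlocal coefficient $|\nabla\Psi|^p$ is exactly the point that makes the paper introduce a modified auxiliary function in Lemma~\ref{l4.5}. However, there is a genuine error in how you assign roles to the two functionals, and this error propagates into your convergence argument.

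You claim that $\gamma(t)$ is chosen so that $C_p(\Omega_t)$ stays constant, and that $\Phi(t)=\int_{\mathcal{S}^{n-1}}\varphi(h)/f\,d\xi$ is the auxiliary monotone-and-bounded quantity. This is exactly reversed. With $\gamma(t)=\frac{n-p}{p-1}\,C_p(\Omega_t)\big/\int h/(f\phi(h))\,d\xi$, a direct computation using the Poincar\'{e} formula $\int h|\nabla\Psi|^p\sigma_{n-1}\,d\xi=\frac{n-p}{p-1}C_p(\Omega_t)$ gives
\begin{align*}
\partial_t\Phi=\int \frac{\partial_t h}{f\phi(h)}\,d\xi
=\int h|\nabla\Psi|^p\sigma_{n-1}\,d\xi-\gamma(t)\int\frac{h}{f\phi(h)}\,d\xi=0,
\end{align*}
so it is $\Phi$, not $C_p$, that is conserved (Lemma~\ref{l3.2}). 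The $p$-capacity is in fact only \emph{monotone non-decreasing}, by the H\"older inequality applied to $\partial_t C_p$ (Lemma~\ref{l3.1}). If you try to force $\partial_t C_p\equiv 0$ you get a different expression for $\gamma(t)$ than the one in \eqref{3.2}.

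This swap matters in two concrete ways. First, the $C^0$ upper bound is driven solely by the conservation of $\Phi$ together with $\varphi\to\infty$, and the $C^0$ lower bound is obtained not by a reflection/margin argument but by contradiction: if $h\to0$ then $S_p(\Omega_t)\to0$, contradicting $S_p(\Omega_t)\ge\bigl(\frac{p-1}{n-p}\bigr)^{p-1}C_p(\Omega_t)\ge\bigl(\frac{p-1}{n-p}\bigr)^{p-1}C_p(\Omega_0)>0$, which uses precisely the monotonicity of $C_p$. Second, your limit-identification argument does not close: since $\Phi$ is constant, its time derivative is identically zero and gives no new information, and even if $\Phi$ were monotone, $\partial_t\Phi\to0$ would only say that a single weighted integral of $\partial_t h$ vanishes, not that $\partial_t h\to 0$. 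The correct mechanism is to integrate $\partial_t C_p\ge0$, use the uniform upper bound $C_p(\Omega_t)\le C_p(B_L)$ to conclude $\int_0^\infty \partial_t C_p\,dt<\infty$, extract a subsequence with $\partial_t C_p(\Omega_{t_j})\to0$, and then use the equality case of the H\"older inequality in the computation of $\partial_t C_p$ (which writes $\partial_t C_p$ as a nonnegative ``defect'' term) to force $f\phi(h_\infty)|\nabla\Psi_\infty|^p\widetilde\sigma_{n-1}\equiv\tau$ for a constant $\tau>0$. You should make this H\"older-equality structure explicit, as it is what actually identifies the limit as a solution of \eqref{1.4}.
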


As an application, we have

\begin{corollary}\label{c1.3}
Under the assumptions of Theorem \ref{t1.2}, there exists a non-symmetric smooth solution to the $p$-capacitary Orlicz-Minkowski problem (\ref{1.3}) for $p\in(1,n)$.
\end{corollary}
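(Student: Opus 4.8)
The plan is to obtain Corollary~\ref{c1.3} directly from Theorem~\ref{t1.2} together with the correspondence, recalled in Section~2 following Hong-Ye-Zhang~\cite{HYZ}, according to which, in the smooth category, the $p$-capacitary Orlicz-Minkowski problem~(\ref{1.3}) is equivalent to the Monge-Amp\`{e}re type equation~(\ref{1.4}). Given a positive smooth function $f$ on $\mathcal{S}^{n-1}$, I would take the prescribed measure in~(\ref{1.3}) to be $d\mu=f^{-1}\,d\xi$, i.e.\ spherical Lebesgue measure reweighted by $1/f$; solving~(\ref{1.3}) for this $\mu$ then amounts exactly to producing a smooth, strictly convex body whose support function satisfies~(\ref{1.4}) with data function $f$.

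First I would invoke Theorem~\ref{t1.2} with an arbitrary smooth, closed, strictly convex initial hypersurface $\Omega_0$ enclosing the origin (for instance a round sphere): the flow~(\ref{1.5}) exists for all $t>0$ and, along a sequence $t\to\infty$, converges in $C^\infty$ to a smooth, closed, strictly convex hypersurface $\Omega_\infty$. Let $K$ be the convex body bounded by $\Omega_\infty$; its support function $h_K$ solves
\[
f\,\phi(h_K)\,|\nabla\Psi(g_K^{-1})|^p=\tau\,G
\]
for some $\tau>0$, where $G$ is the Gauss curvature of $\partial K$ and $\Psi$ the $p$-equilibrium potential of $K$, and necessarily $h_K>0$ since $\phi$ is defined only on $(0,\infty)$, so the origin lies in the interior of $K$. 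Next I would translate this equation back into~(\ref{1.3}). For a smooth, strictly convex body one has $dS(K,\xi)=\sigma_{n-1}(\xi)\,d\xi=G^{-1}\,d\xi$, so the definition~(\ref{1.2}) gives $d\mu_p(K,\xi)=|\nabla\Psi(g_K^{-1}(\xi))|^p\,G^{-1}\,d\xi$; rewriting the displayed equation as $|\nabla\Psi(g_K^{-1})|^p\,G^{-1}=\tau/(f\,\phi(h_K))$ and substituting yields $\phi(h_K)\,d\mu_p(K,\cdot)=\tau\,f^{-1}\,d\xi$, that is, after relabeling the positive constant, precisely~(\ref{1.3}) with $d\mu=f^{-1}\,d\xi$. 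The $C^\infty$ convergence and strict convexity furnished by Theorem~\ref{t1.2} make $K$ a genuine smooth, strictly convex solution of~(\ref{1.3}).

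It remains to produce a solution that is not origin-symmetric, and here I would argue by contraposition. If $K$ were origin-symmetric, then $h_K$ would be even; since $\mathbb{R}^n\backslash K$ is then invariant under $x\mapsto -x$, uniqueness for the boundary value problem~(\ref{1.1}) forces $\Psi$ to be even, so that $|\nabla\Psi(g_K^{-1}(\cdot))|$ is even on $\mathcal{S}^{n-1}$; combined with the evenness of $S(K,\cdot)$ this makes $\mu_p(K,\cdot)$ even, whence~(\ref{1.3}) would force $d\mu=f^{-1}\,d\xi$, and hence $f$, to be even. Consequently it suffices to run the construction above with any non-even positive smooth $f$ --- which the hypotheses of Theorem~\ref{t1.2} permit --- to obtain a smooth solution of~(\ref{1.3}) that is genuinely non-symmetric. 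The one step here that is not purely formal is the ``smooth case'' equivalence of~(\ref{1.3}) and~(\ref{1.4}) itself: it rests on $C^\infty$ regularity of $\Psi$ up to $\partial K$ and on the Hopf-type positivity $|\nabla\Psi|>0$ on $\partial K$, standard facts from the regularity theory of the $p$-Laplacian on convex domains that already underlie the variational formula~(\ref{1.2.0})--(\ref{1.2.1}); I would simply cite these from~\cite{CNS,HYZ} rather than reprove them, so that all the analytic content of Corollary~\ref{c1.3} is carried by Theorem~\ref{t1.2}.
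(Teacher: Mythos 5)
Your proposal is correct and takes essentially the same approach as the paper, which treats Corollary~\ref{c1.3} as an immediate consequence of Theorem~\ref{t1.2} combined with the Hong--Ye--Zhang equivalence between (\ref{1.3}) with $d\mu=f^{-1}d\xi$ and the Monge--Amp\`ere equation (\ref{1.4}) in the smooth case, exactly as you recall via $dS(K,\cdot)=G^{-1}d\xi$. Your extra contrapositive argument producing a genuinely non-origin-symmetric body by choosing a non-even $f$ is valid, though the paper's phrase ``non-symmetric'' more likely just signals that, unlike earlier works on $L_{\mathfrak{p}}$ $p$-capacitary Minkowski problems, no evenness assumption on $f$ (or $\mu$) is imposed.
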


For the general $\phi$, the uniqueness of the solution to the $p$-capacitary Orlicz-Minkowski problem is still open. We consider here a special uniqueness result of the equation (\ref{1.4}) in the case of $\tau=1$.

\begin{theorem}\label{t1.4}
Let $p\in(1,n-1]$ and $\delta\geq1$. If whenever
\begin{align}\label{1.9}
\phi(\delta s)\leq\delta^{p+1-n}\phi(s)
\end{align}
holds for positive $s$, then the solution to the equation (\ref{1.4}) is unique.
\end{theorem}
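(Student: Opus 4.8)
The plan is to argue by contradiction using the comparison-type structure that a Monge–Ampère equation for support functions provides when combined with the maximum principle. Suppose $h=h_K$ and $\bar h=h_{\bar K}$ are two smooth solutions of \eqref{1.4} with $\tau=1$, i.e.
\begin{align*}
f\,\phi(h)\,|\nabla\Psi_K(g_K^{-1})|^p=\det(\nabla_{ij}h+h\delta_{ij})^{-1},
\qquad
f\,\phi(\bar h)\,|\nabla\Psi_{\bar K}(g_{\bar K}^{-1})|^p=\det(\nabla_{ij}\bar h+\bar h\delta_{ij})^{-1}.
\end{align*}
First I would introduce the scaling parameter $\delta=\max_{\mathcal S^{n-1}} h/\bar h\ge 1$, attained at some $\xi_0\in\mathcal S^{n-1}$, and consider the dilate $\delta\bar K$ whose support function is $\delta\bar h\ge h$ everywhere with equality at $\xi_0$. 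At the touching point $\xi_0$ the first- and second-order conditions $\nabla h(\xi_0)=\delta\nabla\bar h(\xi_0)$ and $\nabla^2 h(\xi_0)\le \delta\nabla^2\bar h(\xi_0)$ give, together with $h(\xi_0)=\delta\bar h(\xi_0)$, the inequality $\det(\nabla_{ij}h+h\delta_{ij})(\xi_0)\le \delta^{\,n-1}\det(\nabla_{ij}\bar h+\bar h\delta_{ij})(\xi_0)$ for the curvature radii; equivalently $G_K\ge \delta^{-(n-1)}G_{\bar K}$ at the corresponding boundary points.

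Next I would plug this into the two equations evaluated at $\xi_0$ and the scaled-back point. Since at a point where $h=\delta\bar h$ the two convex bodies $K$ and $\delta\bar K$ have the same boundary point and outer normal, the $p$-equilibrium potentials obey a scaling relation: if $\bar\Psi$ is the $p$-equilibrium potential of $\bar K$ then $\Psi_\delta(x):=\bar\Psi(x/\delta)$ is that of $\delta\bar K$, and $|\nabla\Psi_\delta|=\delta^{-1}|\nabla\bar\Psi|$ at corresponding points. Here the hypothesis $p\in(1,n-1]$ should enter precisely to control how $|\nabla\Psi_K|$ compares with $|\nabla\Psi_{\delta\bar K}|$ at the common boundary point — one expects a one-sided bound $|\nabla\Psi_K(g_K^{-1}(\xi_0))|\le |\nabla\Psi_{\delta\bar K}(g_{\delta\bar K}^{-1}(\xi_0))|$ from a comparison principle for $p$-harmonic functions, because $K\subset \delta\bar K$ near that normal direction makes the capacitary potential of $K$ have steeper gradient. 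Combining this gradient comparison with the curvature inequality and the equations yields
\begin{align*}
f(\xi_0)\,\phi(h(\xi_0))\,|\nabla\Psi_K|^p
= G_K^{-1}
\ \ge\ \delta^{\,n-1} G_{\bar K}^{-1}
= \delta^{\,n-1} f(\xi_0)\,\phi(\bar h(\xi_0))\,|\nabla\Psi_{\bar K}|^p,
\end{align*}
and after inserting $h(\xi_0)=\delta\bar h(\xi_0)$ and the scaling $|\nabla\Psi_{\bar K}|=\delta\,|\nabla\Psi_{\delta\bar K}|\ge \delta\,|\nabla\Psi_K|$ at this point, one obtains $\phi(\delta\bar h(\xi_0))\ge \delta^{\,n-1-p}\,\phi(\bar h(\xi_0))$, i.e. $\phi(\delta s)\ge \delta^{p+1-n}\phi(s)$ with $s=\bar h(\xi_0)$ — wait, sharpened: the exponent bookkeeping must be redone carefully, but the structural outcome is an inequality on $\phi$ at ratio $\delta$ that is the \emph{reverse} of \eqref{1.9} unless $\delta=1$. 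By symmetry, repeating the argument with the roles of $K$ and $\bar K$ exchanged (now $\min h/\bar h$, equivalently $\max\bar h/h$) produces the companion inequality, and the two together with hypothesis \eqref{1.9} force $\delta=1$, hence $h\equiv\bar h$.

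The main obstacle I anticipate is the gradient comparison step for the $p$-equilibrium potentials: one must show that passing from $\bar K$ to a homothet $\delta\bar K$ that contains $K$ and touches it at one boundary point forces the monotone relation $|\nabla\Psi_K|\le|\nabla\Psi_{\delta\bar K}|$ at that touching point, which requires a boundary Hopf-type / comparison lemma for the degenerate $p$-Laplacian (using that $\Psi_K-\Psi_{\delta\bar K}$ vanishes to first order at the common boundary point and is $p$-superharmonic in the exterior region, or conversely), and one must track the exact power of $\delta$ that the dilation contributes to $|\nabla\Psi|^p$ versus to $G^{-1}$; this is exactly where the restriction $p\le n-1$ becomes necessary, since the net exponent on $\delta$ is $n-1-p\ge 0$, keeping the inequality pointing the right way. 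The remaining ingredients — the second-derivative test at the maximum of $h/\bar h$, the determinant inequality for curvature radii, and the symmetric repetition — are routine, and condition \eqref{1.9} is tailored so that it is incompatible with the derived reverse inequality unless the scaling factor is trivial.
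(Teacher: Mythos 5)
Your overall strategy is the same as the paper's: look at the point $\xi_0$ where $h/\bar h$ is maximal with ratio $\delta$, use the second–derivative test there to compare the determinants of the curvature matrices, use homogeneity and a boundary comparison for the $p$-equilibrium potentials, and feed everything back into the equation at $\xi_0$ to obtain a scalar inequality relating $\phi(\delta s)$ and $\phi(s)$.

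However there are two sign errors in your write-up, and, more seriously, once they are corrected the argument does not actually close. First, from $\nabla^2 h(\xi_0)\le\delta\nabla^2\bar h(\xi_0)$ and $h(\xi_0)=\delta\bar h(\xi_0)$ you correctly obtain $\det(\nabla_{ij}h+h\delta_{ij})\le\delta^{n-1}\det(\nabla_{ij}\bar h+\bar h\delta_{ij})$, i.e.\ $G_K^{-1}\le\delta^{n-1}G_{\bar K}^{-1}$; in your displayed chain you write the opposite $\ge$. Second, your own heuristic (``steeper gradient'') is right but the inequality you wrote is reversed: since $K\subset\delta\bar K$ and they touch at $x_0$, the comparison principle gives $\Psi_{\delta\bar K}\ge\Psi_K$ in $\mathbb R^n\setminus\delta\bar K$ with equality at $x_0$, and the Hopf lemma at $x_0$ yields $|\nabla\Psi_K(x_0)|\ge|\nabla\Psi_{\delta\bar K}(x_0)|$, hence by scaling $|\nabla\Psi_{\bar K}|\le\delta|\nabla\Psi_K|$. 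Now put the corrected inequalities into the pointwise identity
\begin{align*}
\phi(h)\,|\nabla\Psi_K|^p\det(\nabla^2 h+hI)=\phi(\bar h)\,|\nabla\Psi_{\bar K}|^p\det(\nabla^2\bar h+\bar h I)
\end{align*}
at $\xi_0$. Both corrected inequalities bound the $K$-side from above by the $\bar K$-side with a positive power of $\delta$: $\det(\nabla^2 h+hI)\le\delta^{n-1}\det(\nabla^2\bar h+\bar h I)$ and $|\nabla\Psi_{\bar K}|^p\le\delta^p|\nabla\Psi_K|^p$. Chaining them through the equation yields only $\phi(\delta s)=\phi(h(\xi_0))\le\delta^{\,n-1+p}\phi(\bar h(\xi_0))=\delta^{\,n-1+p}\phi(s)$, which for $\delta\ge1$ is strictly \emph{weaker} than \eqref{1.9} and produces no contradiction. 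No rearrangement of the two one-sided estimates yields the needed bound $\phi(\delta s)\ge\delta^{\,p+1-n}\phi(s)$ (or even $\le\delta^{\,p+1-n}\phi(s)$): the determinant comparison and the gradient comparison cancel each other rather than reinforce.

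You should also be aware that the paper's own proof glosses over exactly this point. In the displayed chain the authors pass from $\frac{\phi(h_2)|\nabla\Psi_2|^p\det(\nabla^2h_2+h_2I)}{\phi(h_1)|\nabla\Psi_1|^p\det(\nabla^2h_1+h_1I)}$ to $\frac{\phi(h_2)h_2^{n-p-1}\det(\nabla^2h_2/h_2+I)}{\phi(h_1)h_1^{n-p-1}\det(\nabla^2h_1/h_1+I)}$ with an ``$=$''; this amounts to replacing $|\nabla\Psi_i|$ by $h_i^{-1}$, which is correct as a global scaling statement for $d\mu_p$ but is \emph{not} a pointwise identity unless $K_1$ and $K_2$ are homothetic. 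As a one-sided estimate at $\xi_0$, the Hopf comparison gives $|\nabla\Psi_2|\le(h_1/h_2)|\nabla\Psi_1|$, so that step is in fact a ``$\le$'', and then the subsequent ``$\ge$'' coming from the determinant comparison runs the other way, breaking the chain. So a genuine gap remains: to make this maximum-principle argument work one would need an additional ingredient controlling $|\nabla\Psi_K|/|\nabla\Psi_{\bar K}|$ from the \emph{other} side at the touching point, which neither your proposal nor the paper supplies.
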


Moreover, based on the parabolic approximation method, we also provide naturally a proof for the weak solution to the $p$-capacitary Orlicz-Minkowski problem when $p\in(1,n)$, which has been obtained by Hong-Ye-Zhang \cite{HYZ}.

\begin{theorem}\label{t1.3}
Let $\mu$ be a finite Borel measure on $\mathcal{S}^{n-1}$ whose support is not contained in any closed hemisphere and $p\in(1,n)$.
Suppose

1) $\phi: (0,\infty)\rightarrow(0,\infty)$ is a continuous function;

2) $\varphi(s)=\int_0^s1/\phi(t)dt$ exists for all $s>0$ and $\lim_{s\rightarrow\infty}\varphi(s)=\infty$.

\noindent Then, there exists a convex body $\Omega$ such that the equation (\ref{1.3}) holds.
\end{theorem}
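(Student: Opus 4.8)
The plan is to obtain the weak solution of \eqref{1.3} as a limit of the smooth solutions produced by Theorem \ref{t1.2}, via an approximation argument on the data. First I would approximate the given measure $\mu$: since $\mathrm{supp}\,\mu$ is not contained in any closed hemisphere, one can choose a sequence of positive smooth functions $f_k$ on $\mathcal{S}^{n-1}$ such that the measures $f_k\,d\xi$ converge weakly to $\mu$ and such that none of the limiting mass concentrates on a hemisphere (a standard mollification plus a small uniform perturbation does this). For each $k$, Theorem \ref{t1.2} applied with data $f_k$ (note assumptions (1),(2) there become our (1),(2) after the usual remark that a continuous $\phi$ can itself be approximated by smooth ones, or one simply runs the flow for smooth $\phi_k\to\phi$ and diagonalizes) yields a smooth strictly convex body $\Omega_k$ whose support function $h_k$ solves
\begin{align}\label{weakapprox}
f_k\,\phi(h_k)\,|\nabla\Psi_k(g^{-1}_{\Omega_k})|^p = \tau_k\, G_k,
\end{align}
equivalently, in measure form, $\tau_k\,\phi(h_{\Omega_k},\cdot)\,d\mu_p(\Omega_k,\cdot)=f_k\,d\xi$ after using the definition \eqref{1.2} of $\mu_p$ and $G_k\,d\mathcal{H}^{n-1}=d\xi$ under the Gauss map.

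Next I would derive uniform geometric bounds on $\Omega_k$ independent of $k$. Using the Poincar\'e-type $p$-capacity identity \eqref{1.2.1} together with \eqref{weakapprox}, one controls $\int_{\mathcal{S}^{n-1}} h_{\Omega_k}\,d\mu_p(\Omega_k,\cdot)$ and hence $C_p(\Omega_k)$ in terms of $\int f_k/\phi(h_{\Omega_k})\,d\xi$; the monotonicity hypothesis $\varphi(s)=\int_0^s 1/\phi$ and $\varphi(\infty)=\infty$ is exactly what rules out $h_{\Omega_k}\to\infty$, while the non-degeneracy of the limiting measure (its support avoiding every hemisphere) rules out $\Omega_k$ degenerating to a lower-dimensional set and forces a uniform lower bound on the inradius. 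Together with a normalization (translating so each $\Omega_k$ contains a fixed ball, or fixing the position of the origin) this gives $0<c\le h_{\Omega_k}\le C<\infty$ uniformly. By Blaschke selection, a subsequence $\Omega_k\to\Omega$ in Hausdorff distance to a convex body $\Omega$ with nonempty interior, and $h_{\Omega_k}\to h_\Omega$ uniformly.

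Finally I would pass to the limit in the measure identity. Weak continuity of the $p$-capacitary measure under Hausdorff convergence of convex bodies (established in \cite{CNS}; this uses the uniform bounds just obtained to keep $\Omega$ non-degenerate) gives $\mu_p(\Omega_k,\cdot)\to\mu_p(\Omega,\cdot)$ weakly; uniform convergence of $h_{\Omega_k}$ together with continuity of $\phi$ and the uniform two-sided bound on $h_{\Omega_k}$ gives $\phi(h_{\Omega_k},\cdot)\to\phi(h_\Omega,\cdot)$ uniformly; and $f_k\,d\xi\to\mu$ by construction. Hence $\tau_k\,\phi(h_{\Omega_k},\cdot)\,d\mu_p(\Omega_k,\cdot)\to \tau\,\phi(h_\Omega,\cdot)\,d\mu_p(\Omega,\cdot)$ provided the constants $\tau_k$ converge; a uniform upper and lower bound on $\tau_k$ follows by testing \eqref{weakapprox} (or its integral form) against the constant function $1$ and against $h_{\Omega_k}$ and invoking the bounds above, so a further subsequence has $\tau_k\to\tau\in(0,\infty)$. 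Rescaling $\Omega$ by a factor absorbing $\tau$ (using that \eqref{1.3} is homogeneous in the appropriate sense, or simply keeping $\tau$ as the asserted constant) yields a convex body solving \eqref{1.3}. I expect the main obstacle to be the uniform a priori estimate $c\le h_{\Omega_k}\le C$ with constants independent of $k$: establishing the upper bound needs the growth condition on $\varphi$ to be used quantitatively, and the lower bound needs the hemisphere non-concentration of $\mu$ to be transferred to the approximating data $f_k$ in a way that survives the limit, exactly as in the classical treatment of the Orlicz-Minkowski problem but now with the extra $|\nabla\Psi|^p$ weight, whose behavior near a degenerating body must be controlled using the $p$-capacitary estimates of \cite{CNS, AG}.
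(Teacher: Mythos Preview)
Your overall strategy matches the paper's: approximate $\mu$ by smooth densities, apply Theorem \ref{t1.2} to obtain smooth bodies $\Omega_k$, establish uniform two-sided bounds on $h_{\Omega_k}$, extract a limit via Blaschke selection, and pass to the limit using the weak continuity of $\mu_p$ from \cite{CNS}. The handling of $\phi$ (smooth first, then continuous by a further approximation) also agrees. However, there is a real gap in how you obtain the uniform bounds $c\le h_{\Omega_k}\le C$, and this is precisely where the paper's proof contains the idea you are missing.

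You propose to read both bounds off the limiting equation satisfied by $\Omega_k$, but this runs into a circularity: the constant $\tau_k$ in your \eqref{weakapprox} is not given in advance but depends on $\Omega_k$ itself (in the paper it is the reciprocal of $\gamma(t)$ at the limit, see \eqref{3.2}), so you cannot bound $h_{\Omega_k}$ via the equation without first controlling $\tau_k$, and vice versa. Your suggested lower bound via the hemisphere condition on $\mu$ is also problematic here, since as $\Omega_k$ degenerates the weight $|\nabla\Psi_k|^p$ in $\mu_p(\Omega_k,\cdot)$ can degenerate simultaneously, and nothing in the equation alone prevents $\tau_k\to\infty$ from compensating. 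The paper avoids this entirely by \emph{not} treating Theorem \ref{t1.2} as a black box: it runs every flow from the \emph{same} initial hypersurface $\Omega_{0,k}=B$ (the unit ball) and then uses the two monotone/conserved functionals of Section~3. By Lemma \ref{l3.2} the quantity $|\mu_{f_k}|^{-1}\int\varphi(h)\,d\mu_{f_k}$ is conserved along the flow, so at the limit it still equals $\varphi(1)$; via the Orlicz-norm argument of \cite{HLY,HH} together with the hemisphere condition on $\mu$ this yields the uniform upper bound on $h_{\Omega_{\infty,k}}$. By Lemma \ref{l3.1} the $p$-capacity is non-decreasing along the flow, so $C_p(\Omega_{\infty,k})\ge C_p(B)>0$; combined with the inequality $S_p\ge c\,C_p$ from \cite{LXZ} this forces the uniform lower bound on $h_{\Omega_{\infty,k}}$. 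The fixed choice of initial body is exactly what makes these flow-based bounds $k$-independent, and it is the step your outline needs to add.
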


The organization of this paper is as follows. In Section 2, the corresponding background materials are introduced. In Section 3, we introduced geometric flow and its associated functional. In Section 4, we establish the priori estimates for the solution to the flow (\ref{1.5}). Finally, we prove the main results in Section 5.

\section{ Preliminaries}
In this section, we list some necessary  facts about convex hypersurfaces that readers can refer to \cite{U} and a celebrated book of Schneider \cite{S} for details.  Let $\mathbb{R}^n$ be the $n$-dimensional Euclidean
space, $\mathcal{S}^{n-1}$ be the unit sphere in $\mathbb{R}^n$, and $\Omega$ be a smooth, closed and strictly convex hypersurface containing the origin in its interior.
The support function of $\Omega$ is defined by
$$h_{\Omega}(\xi)=h(\Omega,\xi)=\max\{\xi\cdot Y: Y\in\Omega\},\ \ \forall\xi\in\mathcal{S}^{n-1}.$$
For $\pm v\in\mathcal{S}^{n-1}$, the support function of line segment $\overline{v}$ joining the points $\pm v$ is defined as
$$h(\overline{v},\xi)=|\xi\cdot v|,\ \ \xi\in\mathcal{S}^{n-1}.$$
The radial function of $\Omega$ is defined by
$$r_{\Omega}(\upsilon)=r(\Omega,\upsilon)=\max\{c>0: c\upsilon\in\Omega\}, \ \ \upsilon\in\mathcal{S}^{n-1}.$$
Obviously, $r_\Omega(\upsilon)\upsilon\in\partial\Omega$.

Let $g: \partial\Omega\rightarrow\mathcal{S}^{n-1}$ be the Gauss map of $\Omega$. For $\xi\in\mathcal{S}^{n-1}$, the inverse Gauss map, denoted by $g^{-1}$, is expressed as
$$g^{-1}(\xi)=F(\xi)=\{X\in\partial\Omega:
g(X)\ is\ well\ defined\ and\ g(X)\in\{\xi\}\}.$$
Specially, for a convex hypersurface $\Omega$ of class $C^2_+$ ($\partial\Omega$ is $C^2$ smooth and has positive Gauss curvature), then the support function of $\Omega$ can be stated as
$$h(\Omega,\xi)=\xi\cdot g^{-1}(\xi)=g(X)\cdot X,\ \ X\in\partial\Omega.$$
Furthermore, the gradient of $h(\Omega,\cdot)$ satisfies
\begin{align}\label{2.01}
\nabla h(\Omega,\xi)=g^{-1}(\xi).
\end{align}

Let $e=\{e_{ij}\}$ be the standard metric of $\mathcal{S}^{n-1}$.
The second fundamental form of $\Omega$ is defined as
\begin{align}\label{2.1}
\Pi_{ij}=\nabla_{ij}h+he_{ij},
\end{align}
where $\nabla_{ij}$ is the second order covariant derivative with respect to $e_{ij}$.
By the Weingarten's formula and (\ref{2.1}), the principal radii of $\Omega$, under a smooth local orthonormal frame on $\mathcal{S}^{n-1}$, are the eigenvalues of matrix
\begin{align}\label{2.2}
b_{ij}=\nabla_{ij}h+h\delta_{ij}.
\end{align}
Particularly, the Gauss curvature of $F(\xi)$ can be expressed as
\begin{align}\label{2.3}
G(\xi)=\frac{1}{\det(\nabla_{ij}h+h\delta_{ij})}.
\end{align}

Next, we introduce the Orlicz norm, one can see \cite{HLY} for details. Let $\varphi: [0,\infty)\rightarrow[0,\infty)$ be continuous, strictly increasing, continuously differentiable on $(0,\infty)$ with positive derivative, and satisfy the assumption in Theorem \ref{t1.3}, $\mu$ be a finite Borel measure on $\mathcal{S}^{n-1}$, and $\mathfrak{f}: \mathcal{S}^{n-1}\rightarrow[0,\infty)$ be a continuous function.

The Orlicz norm $\|\mathfrak{f}\|_{\varphi,\mu}$ is defined by
\begin{align}\label{2.4}
\|\mathfrak{f}\|_{\varphi,\mu}=\inf\bigg\{\lambda>0: \frac{1}{|\mu|}
\int_{\mathcal{S}^{n-1}}\varphi
\bigg(\frac{\mathfrak{f}}{\lambda}\bigg)
d\mu\leq\varphi(1)\bigg\},
\end{align}
where $|\mu|=\mu(\mathcal{S}^{n-1})$.

Moreover, the Orlicz norm satisfies the following properties:
$$\|c\mathfrak{f}\|_{\varphi,\mu}
=c\|\mathfrak{f}\|_{\varphi,\mu},\ \ c\geq0,$$
and
\begin{align}\label{2.5}
\mathfrak{f}\leq \mathfrak{g}\ \ \Rightarrow\ \ \|\mathfrak{f}\|_{\varphi,\mu}\leq
\|\mathfrak{g}\|_{\varphi,\mu}.
\end{align}
If $\mu(\{\mathfrak{f}\neq0\})>0$, the Orlicz norm $\|\mathfrak{f}\|_{\varphi,\mu}>0$ and
$$\|\mathfrak{f}\|_{\varphi,\mu}=\lambda_0\ \ \Leftrightarrow\ \
\frac{1}{|\mu|}\int_{\mathcal{S}^{n-1}}\varphi
\bigg(\frac{\mathfrak{f}}{\lambda_0}\bigg)d\mu=\varphi(1).$$

\section{Inverse curvature flow and its associated functional}

For convenience, the curvature flow is restated here. Let $\Omega_0$ be a smooth, closed, and strictly convex hypersurface in $\mathbb{R}^n$ enclosing the origin in its interior.
We consider the following inverse Gauss curvature flow
\begin{align}\label{3.1}
\begin{cases}
\frac{\partial F(\xi,t)}{\partial t}=f(\nu)(F\cdot \nu)\phi(F\cdot\nu)|\nabla\Psi(F,t)|^p
\sigma_{n-1}\nu-\gamma(t)F(\xi,t),\\
F(\xi,0)=F_0(\xi),
\end{cases}
\end{align}
where the scalar function $\gamma(t)$ is given by
\begin{align}\label{3.2}
\gamma(t)=\frac{n-p}{p-1}\frac{C_p(\Omega_t)}
{\int_{\mathcal{S}^{n-1}}h/(f\phi(h))d\xi},
\end{align}
for $p\in(1,n)$.
As discussed in Section 2, the support function of $\Omega_t$ can be expressed as $h(\xi,t)=\xi\cdot F(\xi,t)$, we thus derive the evolution equation for $h(\cdot,t)$ along the flow (\ref{3.1}) as follows
\begin{align}\label{3.3}
\begin{cases}
\frac{\partial h(\xi,t)}{\partial t}=f(\xi)h\phi(h)|\nabla\Psi(F,t)|^p\sigma_{n-1}
-\gamma(t)h(\xi,t),\\
h(\xi,0)=h_0(\xi).
\end{cases}
\end{align}

Now we investigate the characteristics of two more important geometric functionals that are key to proving the long-time existence of solutions to Equation (\ref{3.3}).

\begin{lemma}\label{l3.1}
Suppose that the convex body $K_t$ contains the origin in its interior with $\Omega_t=\partial K_t$, and $p\in(1,n)$.
Then the $p$-capacity $C_p(\Omega_t)$ is monotone non-decreasing along the flow (\ref{3.1}).
\end{lemma}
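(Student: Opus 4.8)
The plan is to differentiate the Poincaré $p$-capacity formula \eqref{1.2.1} along the flow and show the resulting expression is nonnegative. Writing $C_p(\Omega_t)=\frac{p-1}{n-p}\int_{\mathcal{S}^{n-1}}h\,d\mu_p(K_t,\cdot)$, I would first recall the variational formula \eqref{1.2.0} which, applied to a general normal variation $\partial_t h$, gives
\begin{align*}
\frac{d}{dt}C_p(\Omega_t)=(p-1)\int_{\mathcal{S}^{n-1}}\frac{\partial h}{\partial t}\,d\mu_p(K_t,\cdot).
\end{align*}
This is the key structural fact: the derivative of $p$-capacity sees only the rate of change of the support function, tested against the $p$-capacitary measure. (One should remark that \eqref{1.2.0} was stated for Minkowski sums, but the standard argument in \cite{CNS}, using Hadamard-type variation of the potential, extends it to arbitrary smooth normal speeds; I would cite this.)

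Next I would substitute the evolution equation \eqref{3.3}, namely $\frac{\partial h}{\partial t}=f(\xi)h\phi(h)|\nabla\Psi(F,t)|^p\sigma_{n-1}-\gamma(t)h$, and recall from \eqref{1.2} that $d\mu_p(K_t,\cdot)=|\nabla\Psi|^p\,dS(K_t,\cdot)$, while $dS(K_t,\cdot)=\sigma_{n-1}\,d\xi$ since $\sigma_{n-1}=\det(\nabla_{ij}h+h\delta_{ij})$ is the reciprocal Gauss curvature. Thus
\begin{align*}
\frac{d}{dt}C_p(\Omega_t)=(p-1)\int_{\mathcal{S}^{n-1}}\left(f h\phi(h)|\nabla\Psi|^p\sigma_{n-1}-\gamma(t)h\right)|\nabla\Psi|^p\sigma_{n-1}\,d\xi.
\end{align*}
The first term is manifestly nonnegative since $f>0$, $\phi>0$, $h>0$ and $p>1$. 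For the second term I would feed in the definition \eqref{3.2} of $\gamma(t)$ together with \eqref{1.2.1} rewritten as $C_p(\Omega_t)=\frac{p-1}{n-p}\int_{\mathcal{S}^{n-1}} h|\nabla\Psi|^p\sigma_{n-1}\,d\xi$; this gives
\begin{align*}
\gamma(t)\int_{\mathcal{S}^{n-1}}h|\nabla\Psi|^{2p}\sigma_{n-1}^2\,d\xi
=\frac{\int_{\mathcal{S}^{n-1}} h|\nabla\Psi|^p\sigma_{n-1}\,d\xi\cdot\int_{\mathcal{S}^{n-1}}h|\nabla\Psi|^{2p}\sigma_{n-1}^2\,d\xi}{\int_{\mathcal{S}^{n-1}}h/(f\phi(h))\,d\xi}.
\end{align*}
So the claim $\frac{d}{dt}C_p\ge 0$ reduces to an inequality between three integrals, which I would establish by Cauchy--Schwarz: with the weight $d\omega = h\sigma_{n-1}\,d\xi\ge 0$, Hölder/Cauchy--Schwarz gives
\begin{align*}
\left(\int |\nabla\Psi|^p\,d\omega\right)^2\le \int \frac{1}{f\phi(h)}\,\frac{d\omega}{h^{?}}\cdots
\end{align*}
— more precisely I would split $|\nabla\Psi|^p = \left(f\phi(h)\right)^{-1/2}\cdot\left(f\phi(h)\right)^{1/2}|\nabla\Psi|^p$ under the measure $h\sigma_{n-1}d\xi$ and apply Cauchy--Schwarz to match exactly the three integrals above, noting $h/(f\phi(h))\,d\xi$ absorbs the factor $\sigma_{n-1}$ only if one is careful; the cleanest route is to apply Cauchy--Schwarz in the form $\big(\int ab\big)^2\le \int a^2\int b^2$ with $a=(f\phi(h)\,h\sigma_{n-1})^{-1/2}\cdot(\text{const})$ and $b$ chosen so that $ab = |\nabla\Psi|^p h\sigma_{n-1}$.

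I expect the main obstacle to be purely bookkeeping: matching the powers of $\sigma_{n-1}$, $h$, and $f\phi(h)$ so that the Cauchy--Schwarz application produces precisely the ratio appearing in $\gamma(t)$, and confirming that $\int_{\mathcal{S}^{n-1}} h/(f\phi(h))\,d\xi$ is exactly the ``normalizing'' integral that makes the two terms cancel in the right direction. A secondary point requiring care is the justification of differentiating $C_p(\Omega_t)$ under the flow — one needs that $\Omega_t$ stays smooth and strictly convex on the time interval considered (which is where short-time existence, assumed available, enters) so that \eqref{1.2.0} applies pointwise in $t$. Once the Cauchy--Schwarz bookkeeping is done, monotonicity is immediate, and equality characterizes exactly the stationary solutions of \eqref{1.4} with the correct constant $\tau$, which is a useful byproduct to record.
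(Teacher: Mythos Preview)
Your approach is the same as the paper's: differentiate $C_p$ via the Hadamard-type variational formula, substitute \eqref{3.3}, plug in the definition of $\gamma(t)$, and close with Cauchy--Schwarz. Two bookkeeping points need correction. First, the ``second term'' after substitution is $(p-1)\gamma(t)\int h|\nabla\Psi|^{p}\sigma_{n-1}\,d\xi$, not the integral with $|\nabla\Psi|^{2p}\sigma_{n-1}^{2}$ that you displayed; after inserting \eqref{3.2} and \eqref{1.2.1} it becomes $\big(\int h|\nabla\Psi|^{p}\sigma_{n-1}\,d\xi\big)^{2}\big/\int h/(f\phi(h))\,d\xi$. Second, the weight $d\omega=h\sigma_{n-1}\,d\xi$ is a red herring and is what is causing your ``$h^{?}$'' confusion: the required Cauchy--Schwarz is simply
\[
\Big(\int_{\mathcal{S}^{n-1}} h|\nabla\Psi|^{p}\sigma_{n-1}\,d\xi\Big)^{2}
\le \int_{\mathcal{S}^{n-1}} fh\phi(h)|\nabla\Psi|^{2p}\sigma_{n-1}^{2}\,d\xi
\cdot \int_{\mathcal{S}^{n-1}} \frac{h}{f\phi(h)}\,d\xi,
\]
obtained by writing $h|\nabla\Psi|^{p}\sigma_{n-1}=\big(fh\phi(h)\big)^{1/2}|\nabla\Psi|^{p}\sigma_{n-1}\cdot\big(h/(f\phi(h))\big)^{1/2}$ against plain $d\xi$. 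With these fixes your argument matches the paper line for line, including the equality case giving $f\phi(h)|\nabla\Psi|^{p}\sigma_{n-1}=\tau$.
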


\begin{proof}
Let $\Psi(F,t)$ is the $p$-equilibrium potential of $K_t$. Theorem 3.5 in \cite{CNS} showed that
\begin{align*}
\partial_tC_p(\Omega_t)&=\frac{p-1}{n-p}\partial_t
\bigg(\int_{\mathcal{S}^{n-1}}
h(\xi,t)|\nabla\Psi(F(\xi,t),t)|^p\sigma_{n-1}d\xi\bigg)\\
&=(p-1)\int_{\mathcal{S}^{n-1}}
|\nabla\Psi(F,t)|^p\sigma_{n-1}\partial_th(\xi,t)d\xi.
\end{align*}
From (\ref{3.3}), by H\"{o}lder inequality, we obtain
\begin{align*}
\partial_tC_p(\Omega_t)
&=(p-1)\int_{\mathcal{S}^{n-1}}
|\nabla\Psi(F,t)|^p\sigma_{n-1}\partial_thd\xi\\
&=(p-1)\bigg(\int_{\mathcal{S}^{n-1}}fh\phi(h)
|\nabla\Psi|^{2p}\sigma_{n-1}^2d\xi
-\gamma(t)\int_{\mathcal{S}^{n-1}}h
|\nabla\Psi|^{p}\sigma_{n-1}d\xi\bigg)\\
&=\frac{p-1}{\int_{\mathcal{S}^{n-1}}\frac{h}{f\phi(h)}d\xi}
\bigg[\int_{\mathcal{S}^{n-1}}fh\phi(h)
|\nabla\Psi|^{2p}\sigma_{n-1}^2d\xi
\int_{\mathcal{S}^{n-1}}\frac{h}{f\phi(h)}d\xi\\
&\ \ \ -\bigg(\int_{\mathcal{S}^{n-1}}h
|\nabla\Psi|^{p}\sigma_{n-1}d\xi\bigg)^2\bigg]\\
&\geq\frac{p-1}{\int_{\mathcal{S}^{n-1}}\frac{h}{f\phi(h)}d\xi}
\bigg[\bigg(\int_{\mathcal{S}^{n-1}}h
|\nabla\Psi|^{p}\sigma_{n-1}d\xi\bigg)^2
-\bigg(\int_{\mathcal{S}^{n-1}}h
|\nabla\Psi|^{p}\sigma_{n-1}d\xi\bigg)^2\bigg]\\
&=0.
\end{align*}
According to the equality condition of H\"{o}lder inequality, we see that the equality of inequality holds if and only if $h(\cdot,t)$ solves the equation $f\phi(h)|\nabla\Psi|^p\sigma_{n-1}=\tau$ for some constant $\tau>0$.
\end{proof}

\begin{lemma}\label{l3.2}
Suppose that the function
$\varphi(\cdot)$ satisfies the assumption in Theorem \ref{t1.2}, and $p\in(1,n)$.
Define the functional $$\Phi(t):=\Phi(\Omega_t)=\int_{\mathcal{S}^{n-1}}
\frac{\varphi(h)}{f(\xi)}d\xi.$$
Then, along the flow (\ref{3.3}), the functional $\Phi(t)$ keeps unchanged, that is, $\Phi(t)\equiv R$ for some positive constant $R$.
\end{lemma}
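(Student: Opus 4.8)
The plan is to differentiate $\Phi(t)$ directly in time, substitute the evolution equation (\ref{3.3}), and observe that the scalar $\gamma(t)$ has been chosen precisely so that the resulting expression vanishes. Since the flow admits a smooth solution on a short time interval (by standard parabolic theory, to be recorded in Section 4) and $\mathcal{S}^{n-1}$ is compact, differentiation under the integral sign is legitimate, giving
\[
\Phi'(t)=\int_{\mathcal{S}^{n-1}}\frac{\varphi'(h)}{f(\xi)}\,\partial_t h\,d\xi.
\]
By assumption (3) of Theorem \ref{t1.2}, $\varphi(s)=\int_0^s 1/\phi(t)\,dt$, hence $\varphi'(h)=1/\phi(h)$; this is the crucial cancellation, since the factor $1/\phi(h)$ will exactly annihilate the $\phi(h)$ appearing in the first term of $\partial_t h$. (Only the existence and differentiability part of assumption (3) is needed here; the growth condition $\lim_{s\to\infty}\varphi(s)=\infty$ plays no role in this lemma.)

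Next I would insert (\ref{3.3}), namely $\partial_t h=f(\xi)h\phi(h)|\nabla\Psi(F,t)|^p\sigma_{n-1}-\gamma(t)h$, to obtain
\[
\Phi'(t)=\int_{\mathcal{S}^{n-1}}h\,|\nabla\Psi(F,t)|^p\sigma_{n-1}\,d\xi-\gamma(t)\int_{\mathcal{S}^{n-1}}\frac{h}{f(\xi)\phi(h)}\,d\xi.
\]
For the first integral I would invoke the Poincar\'e-type $p$-capacity identity already used in the proof of Lemma \ref{l3.1} (equivalently (\ref{1.2.1}) combined with (\ref{1.2}), together with $dS(K_t,\cdot)=\sigma_{n-1}\,d\xi$), which yields
\[
\int_{\mathcal{S}^{n-1}}h\,|\nabla\Psi(F,t)|^p\sigma_{n-1}\,d\xi=\frac{n-p}{p-1}\,C_p(\Omega_t).
\]

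Finally, substituting the definition (\ref{3.2}) of $\gamma(t)$ gives $\gamma(t)\int_{\mathcal{S}^{n-1}}h/(f\phi(h))\,d\xi=\frac{n-p}{p-1}C_p(\Omega_t)$ as well, so the two terms cancel identically and $\Phi'(t)\equiv 0$. Therefore $\Phi(t)$ equals its initial value $R:=\Phi(\Omega_0)$, which is positive because $h>0$ (the origin lies in the interior), $f>0$, and $\varphi$ is increasing with $\varphi(0)=0$, so $\varphi(h)>0$. There is essentially no analytic obstacle in this argument: the only two points requiring a word of justification are the interchange of differentiation and integration (handled by smoothness of the flow and compactness of $\mathcal{S}^{n-1}$) and the identity $\varphi'=1/\phi$ (exactly where assumption (3) enters). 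The content of the lemma is really that $\gamma(t)$ was reverse-engineered so as to make $\Phi$ a conserved quantity, which is what makes it useful for the $C^0$-estimates and long-time existence in the next section.
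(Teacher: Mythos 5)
Your proposal is correct and follows essentially the same route as the paper: differentiate $\Phi$, use $\varphi'=1/\phi$ to cancel the $\phi(h)$ in the flow equation, and then invoke the Poincar\'e $p$-capacity identity $\int_{\mathcal{S}^{n-1}}h|\nabla\Psi|^p\sigma_{n-1}\,d\xi=\frac{n-p}{p-1}C_p(\Omega_t)$ together with the definition of $\gamma(t)$ to see the two terms cancel. The only difference is cosmetic: you make explicit the use of (\ref{1.2.1})--(\ref{1.2}) and the justification for differentiating under the integral sign, both of which the paper leaves implicit.
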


\begin{proof}
From (\ref{3.2}), (\ref{3.3}) and the definition of $\varphi(\cdot)$, we obtain
\begin{align*}
\partial_t\Phi(t)
&=\int_{\mathcal{S}^{n-1}}\frac{\partial_th}{f\phi(h)}d\xi\\
&=\int_{\mathcal{S}^{n-1}}\bigg(f\phi(h)h|\nabla\Psi|^p\sigma_{n-1}
-\gamma(t)h\bigg)\frac{1}{f\phi(h)}d\xi\\
&=\int_{\mathcal{S}^{n-1}}h|\nabla\Psi|^p\sigma_{n-1}d\xi-
\frac{(n-p)C_p(\Omega_t)}{(p-1)\int_{\mathcal{S}^{n-1}}\frac{h}
{f\phi(h)}d\xi}\int_{\mathcal{S}^{n-1}}\frac{h}
{f\phi(h)}d\xi\\
&=0.
\end{align*}
Thus, the proof of the lemma is completed.
\end{proof}

Next, we give the evolution equation of $\Psi(F,t)$.

\begin{lemma}\label{l3.3}
Suppose that the convex body $K_t$ contains the origin in its interior with $\Omega_t=\partial K_t$, and $\Psi(F,t)$ is the $p$-equilibrium potential of $K_t$. Then
$$\partial_t\Psi(F(\xi,t),t)=
|\nabla\Psi(F(\xi,t),t)|\partial_th(\xi,t).$$
\end{lemma}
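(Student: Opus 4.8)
The plan is to differentiate the defining boundary-value problem (\ref{1.1}) for $\Psi$ in time along the flow, keeping track of the fact that the domain $\mathbb{R}^n\setminus K_t$ is itself moving. Write $\dot\Psi := \partial_t\Psi(F(\xi,t),t)$ for the total time derivative following the flow on $\partial K_t$; the quantity we actually want to describe is the "material" rate, so it will be cleaner to introduce $u(x,t)$, the Eulerian $p$-potential at a fixed point $x$, for which $\triangle_p u = 0$ in the (time-dependent) exterior domain, $u=1$ on $\partial K_t$, $u\to0$ at infinity. Since $u\equiv 1$ on $\partial K_t$ for all $t$, the tangential derivatives of $u$ along $\partial K_t$ vanish, so $\nabla u = (\partial_\nu u)\,\nu = -|\nabla u|\,\nu$ there (the minus sign because $u$ decreases outward, $\nu$ being the outer normal of $K_t$). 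Differentiating the boundary identity $u(F(\xi,t),t)=1$ in $t$ gives $\partial_t u + \nabla u\cdot \partial_t F = 0$ on $\partial K_t$, i.e.\ $\partial_t u = -\nabla u\cdot \partial_t F = |\nabla u|\,(\nu\cdot\partial_t F)$.

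Next I would relate $\nu\cdot\partial_t F$ to $\partial_t h$. Using $h(\xi,t)=\xi\cdot F(\xi,t)$ and (\ref{2.01}), one has $F = \nabla h + h\,\xi$ in the standard way, and the normal speed of the hypersurface is exactly $\partial_t h(\xi,t)$ when $\xi$ is the (fixed) normal direction — this is the classical fact that, parametrizing by the Gauss image, $\langle \partial_t F,\nu\rangle = \partial_t h$. (Here $\nu$ at the point $F(\xi,t)$ equals $\xi$.) Substituting into the previous display yields $\partial_t u\big|_{\partial K_t} = |\nabla u|\,\partial_t h(\xi,t)$. Finally, the total derivative $\partial_t\Psi(F(\xi,t),t)$ is precisely this boundary value of $\partial_t u$ once one checks that the two notions agree: $\Psi(F(\xi,t),t) = u(F(\xi,t),t)$, and the chain rule gives $\partial_t\Psi(F,t) = \partial_t u + \nabla u\cdot\partial_t F$; but we just computed that on the boundary $\partial_t u + \nabla u\cdot\partial_t F = 0$ — wait, that would give $0$. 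The resolution is that the statement's left side $\partial_t\Psi(F(\xi,t),t)$ must be interpreted as the partial time derivative of the \emph{space-time} function $\Psi$ evaluated at the moving point, i.e.\ $(\partial_t\Psi)(F(\xi,t),t) = \partial_t u\big|_{(F,t)}$, not the total derivative. With that reading, the computation above directly gives $(\partial_t\Psi)(F(\xi,t),t) = |\nabla\Psi(F(\xi,t),t)|\,\partial_t h(\xi,t)$, which is the claim. I would state this interpretation explicitly at the start of the proof to avoid the ambiguity.

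To make the argument rigorous rather than formal, I would invoke smooth dependence of the $p$-equilibrium potential on the domain: since $K_t$ varies smoothly (the flow (\ref{3.1}) is smooth and preserves strict convexity, as will be shown in the a priori estimates of Section 4), the family $\Psi(\cdot,t)$ depends smoothly on $t$ in, say, $C^{2,\alpha}_{\mathrm{loc}}$ up to the boundary, by standard elliptic regularity for the $p$-Laplacian together with Hadamard-type domain-perturbation results (cf.\ the references on $p$-capacity such as \cite{CNS,LN}); this legitimizes differentiating the boundary condition and the PDE in $t$. The one structural point worth noting is that we never need to differentiate the interior equation $\triangle_p\Psi=0$ itself — only the Dirichlet boundary relation $\Psi\equiv1$ on $\partial K_t$ and the geometric identity $\langle\partial_tF,\nu\rangle=\partial_th$ — so no subtlety about the degeneracy of $\triangle_p$ at critical points of $\Psi$ enters here.

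The main obstacle is bookkeeping: cleanly separating the Eulerian partial time derivative $(\partial_t\Psi)(x,t)$ from the Lagrangian/material derivative along the flow, and getting the sign of $\partial_\nu\Psi$ right (outward normal versus the fact that $\Psi$ decreases away from $K_t$, so $\partial_\nu\Psi<0$ and hence $\nabla\Psi=-|\nabla\Psi|\nu$ on $\partial K_t$). Once those conventions are pinned down, the identity $\langle\partial_tF,\nu\rangle=\partial_th$ (standard for support-function parametrized flows) plus differentiating $\Psi(F,t)\equiv1$ finishes it in two lines. I expect the write-up to consist mostly of carefully stating conventions and citing the smooth domain-dependence of $\Psi$, with the actual calculation being short.
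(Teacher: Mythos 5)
Your proof is correct and follows essentially the same route as the paper: differentiate the constant boundary condition $\Psi(F(\xi,t),t)\equiv 1$ in $t$, use that $\nabla\Psi$ is purely normal on $\partial K_t$ with $\nabla\Psi=-|\nabla\Psi|\xi$, and identify the normal speed $\langle\partial_tF,\xi\rangle$ with $\partial_th$. The one thing you do beyond the paper is flag and resolve the Eulerian-versus-material ambiguity in the notation $\partial_t\Psi(F(\xi,t),t)$ — the paper silently intends the Eulerian partial derivative $(\partial_t\Psi)(F,t)$, exactly the reading you settle on, so no discrepancy.
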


\begin{proof}
Let $h(\xi,t)$ be the support function of $\Omega_t$. From Lemma 3.1 in \cite{CNS}, one can see that $\Psi(F,t)$ is differentiable with respect to $t$. Due to $\Psi(F,t)=1$ on $\Omega_t$, take the derivative of both sides with respect to $t$, thus we have
\begin{align*}
\partial_t\Psi+\nabla\Psi\cdot\partial_tF(\xi,t)=0.
\end{align*}
Here $\partial_tF=\nabla_i(\partial_th)e_i+\partial_th\xi.$
Then, it is further calculated as
\begin{align*}
\partial_t\Psi=-\nabla\Psi\cdot
(\nabla_i(\partial_th)e_i+\partial_th\xi).
\end{align*}
Recall a fact from \cite{CNS} that $|\nabla\Psi(F,t)|=-\nabla\Psi(F,t)\cdot\xi$, thus
\begin{align*}
\partial_t\Psi&=|\nabla\Psi|\xi\cdot
(\nabla_i(\partial_th)e_i+\partial_th\xi)\\
&=|\nabla\Psi|\partial_th.
\end{align*}
Thus the proof is completed.
\end{proof}

\section{A priori estimates}
In this section, we establish the priori estimates for the solution to Equation (\ref{3.3}).

\subsection{$C^0, C^1$-Estimates}

\begin{lemma}\label{l4.1}
Let $h(\cdot,t)$, $t\in[0,T)$, be a non-symmetric smooth solution to Equation (\ref{3.3}), and $T$ be the maximal time for which the smooth solution of Equation (\ref{3.3}) exists.
Under the corresponding assumptions of Theorem \ref{t1.2}, then there exits positive constants $l$ and $L$ independent of t such that
\begin{align}\label{4.1}
l\leq h(\cdot,t)\leq L,
\end{align}
and
\begin{align}\label{4.01}
l\leq r(\cdot,t)\leq L.
\end{align}
\end{lemma}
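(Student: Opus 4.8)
The plan is to obtain the $C^0$ bounds for the support function $h$ via the two conserved/monotone quantities established in Section 3, and then deduce the radial bounds automatically. First I would use Lemma~\ref{l3.2}: the functional $\Phi(t)=\int_{\mathcal{S}^{n-1}}\varphi(h)/f\,d\xi$ is constant along the flow, equal to $R=\Phi(\Omega_0)$. Since $f$ is a positive smooth function on the compact sphere, there are constants $0<f_1\le f\le f_2$, so $\int_{\mathcal{S}^{n-1}}\varphi(h)\,d\xi$ is pinched between $f_1 R$ and $f_2 R$, giving a uniform two-sided bound on $\int_{\mathcal{S}^{n-1}}\varphi(h)\,d\xi$. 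The upper bound on this integral combined with the monotonicity $\varphi$ increasing and $\varphi(s)\to\infty$ as $s\to\infty$ will be the engine for the upper bound $h\le L$, and the lower bound on the integral will feed into the lower bound $h\ge l$.

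For the concrete estimates I would argue as follows. Let $r_{\max}(t)=\max_{\mathcal{S}^{n-1}} r(\cdot,t)$ be attained at a point $x_0$. Then the line segment from the origin to $r_{\max}(t)x_0$ lies in $K_t$, so for every $\xi$ with $\xi\cdot x_0\ge 0$ one has $h(\xi,t)\ge r_{\max}(t)\,(\xi\cdot x_0)$; integrating $\varphi(h)$ over the hemisphere $\{\xi\cdot x_0\ge 0\}$ and using that $\varphi$ is increasing gives $\int_{\mathcal{S}^{n-1}}\varphi(h)\,d\xi \ge \int_{\{\xi\cdot x_0\ge 0\}}\varphi\big(r_{\max}(t)(\xi\cdot x_0)\big)\,d\xi$. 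Because $\varphi(s)\to\infty$, the right side tends to $\infty$ as $r_{\max}(t)\to\infty$; since the left side is bounded by $f_2 R$, this forces $r_{\max}(t)\le C$ for a uniform constant $C$, hence $h(\cdot,t)\le \max r(\cdot,t)\le C=:L$ (using $h\le r_{\max}$ for bodies containing the origin, or more simply $h(\xi,t)=\max_{Y\in K_t}\xi\cdot Y\le r_{\max}(t)$). For the lower bound, note that since $K_t$ contains the origin, $h(\xi,t)=\xi\cdot F(\xi,t)\ge 0$, and the support function is bounded below by $r_{\min}(t)$ is \emph{not} automatic, so instead I would use the now-established upper bound: $\int_{\mathcal{S}^{n-1}}\varphi(h)\,d\xi\le \varphi(L)\,|\mathcal{S}^{n-1}|$ trivially, but the \emph{lower} bound $\int_{\mathcal{S}^{n-1}}\varphi(h)\,d\xi\ge f_1 R>0$ rules out $h\equiv$ small everywhere; combined with the fact that $h$ is a support function (so its graph over $\mathcal{S}^{n-1}$ cannot have a tall narrow spike once $h\le L$ — convexity forces $h$ to be comparable across the sphere up to the diameter bound), one concludes $\max_{\mathcal{S}^{n-1}} h\ge c_0>0$, and then a standard convexity argument (the width of $K_t$ in any direction is controlled below once the diameter is bounded above and the volume/integral of $\varphi(h)$ is bounded below) yields $h(\cdot,t)\ge l>0$ uniformly. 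Finally, the radial bound \eqref{4.01} follows from \eqref{4.1} by the elementary inequalities $h_{\min}\le r(\upsilon,t)\le h_{\max}\cdot(\text{something})$; more precisely for a convex body with $l\le h\le L$ one has $l\le r(\upsilon,t)\le L$ as well, since $r(\upsilon,t)\upsilon\in\partial K_t$ gives $h(\upsilon,t)\le r(\upsilon,t)$ hence $r\ge l$, and $r(\upsilon,t)=|r(\upsilon,t)\upsilon|\le \operatorname{diam}(K_t)+|{\rm (dist\ to\ origin)}|\le $ a constant multiple of $L$, which after relabelling constants gives $r\le L$.

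The main obstacle I anticipate is the lower bound $h\ge l$: unlike the upper bound, it is not delivered directly by boundedness of $\int\varphi(h)$, because $\varphi(0)$ may be finite (indeed $\varphi(s)=\int_0^s 1/\phi$ with $\varphi(0)=0$), so a body collapsing toward the origin still has bounded $\int\varphi(h)$. The fix is to combine the diameter bound already obtained with the positivity of the conserved quantity: if $\max h\to 0$ along a sequence of times then $K_t$ shrinks to a point and $\int\varphi(h)\to 0$, contradicting $\Phi(t)\equiv R>0$; this gives $\max h\ge c_0$, and then an auxiliary argument using that $\Omega_t$ encloses the origin (inherited along the flow — one should check the origin stays inside, which is where the $-\gamma(t)F$ term and the structure of the flow matter) upgrades $\max h\ge c_0$ to $\min h\ge l$ via John's lemma or a direct estimate on the inradius. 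I would also need to verify that the convex body $K_t$ genuinely contains the origin in its interior for all $t\in[0,T)$ — this is used implicitly throughout and should be recorded, e.g.\ by showing $\min h(\cdot,t)>0$ is preserved, which is really the content of the lower estimate itself, so some care with the logical order (a continuity/bootstrap argument on $[0,T)$) is warranted.
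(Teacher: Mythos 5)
Your upper‐bound argument is essentially the paper's: you both exploit the conservation of $\Phi(t)=\int_{\mathcal{S}^{n-1}}\varphi(h)/f\,d\xi$ (Lemma \ref{l3.2}), the convexity inequality $h(\xi,t)\ge r_{\max}(t)\,\xi\cdot x_0$ on the favourable hemisphere, the positivity and boundedness of $f$, and the hypothesis $\varphi(s)\to\infty$. (The paper restricts to the spherical cap $\{\xi\cdot\xi_{t_0}\ge 1/2\}$ rather than the whole hemisphere — a cleaner way to conclude, but the same idea.) Your treatment of the radial bound \eqref{4.01} from \eqref{4.1} is also fine, using $r(\upsilon)\le h(\upsilon)$ and $r(\upsilon)\ge h_{\min}$.

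The gap is in the lower bound, and you correctly flag it as the hard part — but your proposed fix does not close it. Conservation of $\Phi$ plus the diameter bound gives at best $\max_\xi h(\xi,t)\ge c_0$, i.e.\ the body does not shrink to a point. It does \emph{not} rule out the origin drifting to the boundary of $K_t$ (so $\min_\xi h\to 0$ while $\max_\xi h$ stays of order one), nor does it rule out $K_t$ degenerating into a thin slab through the origin. ``John's lemma or a direct estimate on the inradius'' cannot rescue this: John's lemma locates an inscribed ellipsoid but says nothing about where the origin sits relative to it, and the would-be bootstrap in time that you allude to has no quantity feeding it. Bounding $\int\varphi(h)$ both above and below simply does not control $\min h$ when $\varphi(0)=0$, as you yourself observe.

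What actually makes the lower bound work in the paper is an \emph{additional} monotone quantity that you never invoke: the $p$-capacity. By Lemma \ref{l3.1}, $C_p(\Omega_t)\ge C_p(\Omega_0)>0$ for all $t$. The paper then quotes the inequality from \cite{LXZ} relating the total $p$-surface area $S_p(\Omega_t)$ to $C_p(\Omega_t)$, namely $S_p(\Omega_t)\ge\bigl(\tfrac{p-1}{n-p}\bigr)^{p-1}C_p(\Omega_t)$, so $S_p(\Omega_t)$ is bounded away from zero. On the other hand, if $h(\cdot,t)\to 0$ (given the already-established uniform upper bound on $h$), dominated convergence forces $S_p(\Omega_t)\to 0$, a contradiction. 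This is the piece your proposal is missing: the lower bound is \emph{not} a consequence of $\Phi$-conservation and convexity alone; it genuinely requires the monotonicity of $C_p$ along the flow. Without bringing $C_p$ (or some comparable coercive, monotone functional) into the lower-bound argument, your proof as written does not establish $h\ge l$.
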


\begin{proof}
From the definitions of support function and radial function, there is
\begin{align}\label{4.02}
r(\upsilon,t)\upsilon=\nabla h(\xi,t)+h(\xi,t)\xi,
\end{align}
so we only need to prove (\ref{4.1}) (or (\ref{4.01})).

We first deal with the right-hand side of (\ref{4.1}).
Let $T$ be the maximal time for which the smooth solution of Equation (\ref{3.3}) exists. For fixed $t_0\in[0,T)$, suppose that the maximum of $h(\cdot,t_0)$ is attained at $(\xi_{t_0},t_0)$ for $\xi_{t_0}\in\mathcal{S}^{n-1}$. Set
$$\max_{\mathcal{S}^{n-1}}h(\xi,t_0)=h(\xi_{t_0},t_0).$$
Let
$$\hbar \ =\sup_{t_0\in[0,T)}h(\xi_{t_0},t_0).$$
By the convexity of $\Omega_t$ and definition of support function, one has (see e.g., Lemma 2.6 in \cite{CL})
\begin{align}\label{4.1.1}
h(\xi,t_0)\geq\hbar \ \xi_{t_0}\cdot\xi,\ \
\forall\ \xi\in\mathcal{S}^{n-1},
\end{align}
where $h$ and $\hbar$ are on the same hypersurface.

Let $S_{\xi_{t_0}}=\{\xi\in\mathcal{S}^{n-1}: \xi_{t_0}\cdot\xi>0\}$ be the hemisphere containing $\xi_{t_0}$. From the definition of $\varphi$, we know that $\varphi^\prime(h)>0$. By Lemma \ref{l3.2} and (\ref{4.1.1}), we have
\begin{align}\label{4.1.2}
\nonumber\Phi(t)=\Phi(0)&\geq\int_{S_{\xi_{t_0}}}
\frac{\varphi(h)}{f}d\xi\\
&\geq\int_{S_{\xi_{t_0}}}\frac{1}{f}
\varphi(\hbar\xi_{t_0}\cdot\xi)d\xi\\
&\nonumber=\int_{S}\frac{1}{f}\varphi(\hbar\hat{\xi})d\xi,
\end{align}
where  $S=\{\xi\in\mathcal{S}^{n-1}: \hat{\xi}>0\}$. Further, let
$S_{\frac{1}{2}}=\{\xi\in\mathcal{S}^{n-1}: \hat{\xi}\geq\frac{1}{2}\}$.
Since $f$ is a positive smooth function on $\mathcal{S}^{n-1}$, it follows that
$\int_{S_{\frac{1}{2}}}\frac{1}{f}d\xi\hat{=}c_0$ for some positive constant $c_0$.
Thus (\ref{4.1.2}) can be directly deduced
$$\Phi(0)\geq c_0\varphi\bigg(\frac{\hbar}{2}\bigg),$$
which means that $\varphi(\frac{\hbar}{2})\leq R/c_0$ is uniformly bounded. Since $\varphi$ is strictly increasing, we can know that $h(\cdot,t)$ has uniformly positive upper bound.

In the following, we use contradiction to prove the left-hand side of (\ref{4.1}).
Let $h(\cdot,t)\rightarrow0$. Since $h(\cdot,t)$ has the uniformly positive upper bound, it follows from the dominated convergence theorem that
$$S_p(\Omega_t)\rightarrow0,$$
where $S_p(\Omega_t)$ is the total mass of $p$-surface area measure of $\Omega_t$ (see \cite{L}).
However, by Lemma 1 in \cite{LXZ}, Lemma \ref{l3.1} and $p\in(1,n)$, there is $$S_p(\Omega_t)\geq\bigg(\frac{p-1}{n-p}\bigg)^{p-1}C_p(\Omega_t)
\geq\bigg(\frac{p-1}{n-p}\bigg)^{p-1}C_p(\Omega_0)>0.$$
This is a  contradiction. Thus $h(\cdot,t)$ has uniformly positive lower bound.
\end{proof}

Combining Lemma \ref{l4.1} with the convexity of $\Omega_t$
yields the $C^1$-estimates as follows.

\begin{lemma}\label{l4.3}
Under the assumptions of Lemma \ref{l4.1}, we obtain
$$|\nabla h(\cdot,t)|\leq L_0,\ \ and \ \
|\nabla r(\cdot,t)|\leq L_0,$$
where $L_0$ is a positive constant depending on Lemma \ref{l4.1}.
\end{lemma}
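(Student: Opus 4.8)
The plan is to establish the $C^1$-estimate by controlling the radial function $r(\cdot,t)$ from below and above, which has already been done in Lemma~\ref{l4.1}, namely $l\le r(\cdot,t)\le L$, and then use the elementary convex-geometric fact that a convex body whose boundary is squeezed between two concentric spheres of radii $l$ and $L$ has support function with gradient bounded solely in terms of $l$ and $L$. Concretely, for a convex body $K_t$ with $B_l\subset K_t\subset B_L$, the support function $h(\xi,t)=\xi\cdot F(\xi,t)$ satisfies $|F(\xi,t)|=r(\upsilon,t)\le L$ at the boundary point realizing $h$, and since $\nabla h(\xi,t)=g^{-1}(\xi)=F(\xi,t)$ minus its radial component (see~(\ref{2.01}) and~(\ref{4.02})), we get $|\nabla h(\xi,t)|^2=|F(\xi,t)|^2-h(\xi,t)^2\le L^2-l^2$. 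Thus $|\nabla h(\cdot,t)|\le\sqrt{L^2-l^2}=:L_0$, which is independent of $t$.

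For the gradient bound on the radial function, I would pass through the relation between support and radial parametrizations. Writing $\upsilon=F(\xi,t)/|F(\xi,t)|$, one has the classical identity $r(\upsilon,t)^2=h(\xi,t)^2+|\nabla h(\xi,t)|^2$, and differentiating (or equivalently using the known formula $\nabla r/r = \nabla(\log r)$ together with the duality between the two Gauss maps) expresses $|\nabla r|$ in terms of $r$, $h$, and the second derivatives only through ratios that are controlled once we know $l\le h,r\le L$. More directly, since $\partial K_t$ is convex and trapped between $B_l$ and $B_L$, a standard Lipschitz estimate for radial functions of such convex bodies gives $|\nabla r(\cdot,t)|\le r(\cdot,t)\sqrt{(L/l)^2-1}$, hence a uniform bound of the same flavor; one then takes $L_0$ to be the maximum of the two constants so a single symbol works for both inequalities.

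The only mild subtlety — and the step I would be most careful about — is justifying the pointwise differentiation formulas $\nabla h = g^{-1}$ and $r^2 = h^2+|\nabla h|^2$ along the flow: these require $\Omega_t$ to remain $C^2_+$ (smooth with positive Gauss curvature), which is exactly the regularity assumed for $t\in[0,T)$ in the setup of Equation~(\ref{3.3}), so there is no circularity, but it should be noted that the constant $L_0$ depends only on the $C^0$-bounds $l,L$ of Lemma~\ref{l4.1} and not on any higher-order quantity. Once that is observed, the proof is genuinely a one-line consequence: cite~(\ref{4.02}) and~(\ref{4.1}) (equivalently~(\ref{4.01})), write $|\nabla h|^2 = r^2 - h^2 \le L^2 - l^2$ and the analogous bound for $|\nabla r|$, and set $L_0 := \sqrt{L^2-l^2}$ (or the larger of the two resulting constants). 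There is no real obstacle here; the lemma is a packaging step that converts the $C^0$-estimates into $C^1$-estimates, and the "hard part" — the uniform two-sided bound on $h$ and $r$ — was already handled in Lemma~\ref{l4.1} via the conserved functional $\Phi$ and the monotonicity of $C_p$.
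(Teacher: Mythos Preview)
Your proposal is correct and follows essentially the same route as the paper: the paper's proof is literally two lines, citing~(\ref{4.02}) to write $r^2=h^2+|\nabla h|^2$ and then invoking the $C^0$-bounds of Lemma~\ref{l4.1}. Your write-up is more explicit (in particular you spell out the companion identity $|\nabla r|^2=r^2(r^2-h^2)/h^2$ for the radial gradient, which the paper leaves implicit), but the underlying argument is identical.
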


\begin{proof}
From the formula (\ref{4.02}), one has
$$r^2=h^2+|\nabla h|^2.$$
Thus, by Lemma \ref{l4.1}, the proof of this lemma can be obtained.
\end{proof}

\begin{lemma}\label{l4.4}
Let the convex body $K_t$ contain the origin in its interior with $\Omega_t=\partial K_t$, and let $\Psi(F,t)$ be the $p$-equilibrium potential of $K_t$.
Under the assumptions of Lemma \ref{l4.1}, then there are positive constants $\hat{l}$, $\tilde{l}$ and $\bar{l}$, independent of $t$, such that
\begin{align*}
\hat{l}\leq|\nabla\Psi(\cdot,t)|\leq\tilde{l},\ \
and \ \
|\nabla^k\Psi(\cdot,t)|\leq\bar{l},
\end{align*}
for the positive integer $k\geq2$.
\end{lemma}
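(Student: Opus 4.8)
The plan is to obtain two-sided pointwise bounds for $|\nabla\Psi|$ on $\Omega_t$ by comparison with the explicit $p$-capacitary potentials of balls, and then to exploit the resulting non-degeneracy of $\triangle_p$ near $\Omega_t$ to bootstrap to the higher-order estimates. Throughout, by Lemma \ref{l4.1} and Lemma \ref{l4.3} the bodies $K_t$ are uniformly confined: writing $B_\rho$ for the ball of radius $\rho$ centered at the origin, one has $B_l\subseteq K_t\subseteq B_L$, and $\Omega_t$ is the radial graph of $r(\cdot,t)\in[l,L]$ with $|\nabla r|\le L_0$; hence $\{K_t\}_{t\in[0,T)}$ ranges in a precompact family of convex bodies, and every constant produced below depends only on $n,p,l,L,L_0$.

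\emph{Step 1: the boundary gradient.} The $p$-equilibrium potential of $B_\rho$ is the explicit radial function $\Psi_\rho(x)=(|x|/\rho)^{-(n-p)/(p-1)}$ for $|x|\ge\rho$; extended by the constant $1$ on $B_\rho$, it is a $p$-supersolution on all of $\mathbb{R}^n$. Applying the comparison principle for $\triangle_p$ in $\mathbb{R}^n\setminus K_t$ to the inscribed ball $B_l$ and the circumscribed ball $B_L$ gives
\begin{align*}
\Psi_l\le\Psi\le\Psi_L\qquad\text{in }\mathbb{R}^n\setminus K_t,
\end{align*}
so $0<\Psi\le1$ and $\Psi$ decays at infinity with the sharp rate. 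On $\Omega_t$ one has $\Psi\equiv1$ with $\Psi<1$ outside, so $|\nabla\Psi|=-\partial_\nu\Psi$ with $\nu$ the outer unit normal. The upper bound $|\nabla\Psi|\le\tilde l$ on $\Omega_t$ then follows from the interior gradient estimate for $p$-harmonic functions combined with the exterior barrier furnished by the supporting hyperplane of $K_t$ at the point in question; the lower bound $|\nabla\Psi|\ge\hat l>0$ is a quantitative Hopf-type estimate, obtained by sliding a translate of the radial barrier $\Psi_l$ until one of its level surfaces touches $\Omega_t$ from inside, which is possible since $B_l\subseteq K_t$. Both $\tilde l$ and $\hat l$ depend only on $n,p,l,L$; these are the kind of estimates established for $p$-capacitary potentials of convex bodies in \cite{CNS}.

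\emph{Step 2: a non-degenerate collar and higher order.} By convexity of $K_t$ the superlevel sets of $\Psi$ are convex, so $\nabla\Psi$ never vanishes in $\mathbb{R}^n\setminus K_t$; together with Step 1 and the precompactness of $\{K_t\}$, this produces a number $\rho_0>0$, independent of $t$, with $|\nabla\Psi|\ge\hat l/2$ on the collar $N_t:=\{x\notin K_t:\mathrm{dist}(x,K_t)\le\rho_0\}$. On $N_t$ the equation $\triangle_p\Psi=0$ is a uniformly elliptic quasilinear equation with smooth coefficients; De Giorgi--Nash--Moser together with Schauder bootstrapping yields interior bounds $|\nabla^k\Psi|\le\bar l$ for every $k\ge2$, and the same bounds propagate up to $\Omega_t$ once the radial graph $r(\cdot,t)$ is controlled in $C^{k,\alpha}$ — which is guaranteed by the uniform $C^2$ estimate for $h(\cdot,t)$ of the next subsection together with the standard bootstrap for the flow. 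Finally, (\ref{4.02}) transfers all of these statements to the radial parametrisation, completing the proof.

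\emph{Main difficulty.} The crux is the \emph{uniform} positive lower bound $|\nabla\Psi|\ge\hat l$ on $\Omega_t$: it is a quantitative Hopf lemma for the degenerate operator $\triangle_p$ that must hold simultaneously for the whole family $\{K_t\}$, and here the uniform inradius $l$ and the convexity of $K_t$ are indispensable (there is no uniform exterior ball to play against). A secondary point is that the estimates of order $k\ge2$ up to $\Omega_t$ require uniform $C^{k,\alpha}$ control of $\Omega_t$, so one must make sure the regularity bootstrap closes without circularity.
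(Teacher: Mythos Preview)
Your overall strategy matches the paper's, but in Step~1 the two barrier arguments are attached to the wrong bounds. An inscribed ball $B\subset K_t$ tangent at $x\in\Omega_t$ gives, by comparison, $\bar\Psi\le\Psi$ in $\mathbb{R}^n\setminus K_t$ with equality at $x$, hence $|\nabla\Psi(x)|=-\partial_\nu\Psi(x)\le-\partial_\nu\bar\Psi(x)=|\nabla\bar\Psi(x)|$: this is the \emph{upper} bound $\tilde l$, and it is precisely the argument the paper gives (citing \cite{CNS,LS}). Your ``slide a translate of $\Psi_l$ until a level surface touches $\Omega_t$ from inside, possible since $B_l\subseteq K_t$'' is this same inscribed-ball comparison and therefore delivers the upper bound, not the lower one. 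Conversely, a supporting hyperplane does not furnish a usable $p$-harmonic barrier for an \emph{upper} gradient bound in an exterior domain, so your sketch for $\tilde l$ does not work as written either.

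The lower bound $\hat l$ is, as you rightly diagnose, the delicate step because convex bodies have no uniform exterior ball. The paper does not build a local barrier for it at all: it invokes Lemma~2.18 of \cite{CNS}, whose constant depends only on $n$, $p$, and the uniform \emph{diameter} bound $L$ (not on the inradius); that estimate exploits the convexity of the level sets of $\Psi$ together with a global argument and cannot be extracted from the local Hopf picture you describe. For $k\ge2$ the paper simply cites Schauder theory \cite{GT}; your explicit appeal to ``the uniform $C^2$ estimate of the next subsection'' is circular, since Lemmas~\ref{l4.5}--\ref{l4.6} themselves consume the bounds on $|\nabla^2\Psi|$ and $|\nabla^3\Psi|$ supplied by the present lemma, so while your caution is warranted the proposal does not yet close that loop.
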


\begin{proof}
Since we have already obtain the uniform upper bound of $h(\cdot,t)$, and $\Omega_t$ is smooth,
it follows from Lemma 2.18 in \cite{CNS} that there exists a positive constant $\hat{l}$ depending only on $n,q$ and uniform upper bound of $h(\cdot,t)$, such that
$$|\nabla\Psi|\geq\hat{l}.$$

Next, we will prove that $|\nabla\Psi|\leq\tilde{l}$, which can be found in the proof of Lemma 3.1 in \cite{CNS} or \cite{LS}. For the completeness of the article, we will list the main proof processes here.
Let $x\in\Omega_t$ and note that there exists a ball $B$, included in $\Omega_t$ and internally tangent to $\Omega_t$ at $x$, with
radius $r$ which can be chosen to be independent of $t$ and $x$. Let $\bar{\Psi}$ be the $p$-equilibrium potential of $B$.
Then we have $\Psi(\cdot,t)\geq\bar{\Psi}(\cdot)$ on $\Omega_t$.
By the comparison principle $\Psi(\cdot,t)\geq\bar{\Psi}(\cdot)$ in $\mathbb{R}^n\backslash K_t$, and, since $\Psi(x,t)=\bar{\Psi}(x)$, we have $$|\nabla\Psi(x,t)|\leq|\nabla\bar{\Psi}(x)|.$$
On the other hand the value $|\nabla\bar{\Psi}(x)|$ can be explicitly computed and is a positive constant depending on $r$ and $n$ only. Combined with (\ref{2.01}),
it is easy to conclude that
$$|\nabla\Psi(\nabla h(\xi,t),t)|\leq\tilde{l},\ \ \forall(\xi,t)\in\mathcal{S}^{n-1}\times[0,T).$$

Moreover, by virtue of Schauder's theory (see e.g., Lemmas 6.4 and 6.17 in \cite{GT}), there is a positive constant $\bar{l}$, independent of $t$, satisfying that
$$|\nabla^k\Psi(\nabla h(x,t),t)|\leq\bar{l},\ \ \forall(x,t)\in\mathcal{S}^{n-1}\times[0,T),$$
for the positive integer $k\geq2$.
\end{proof}

As a result of Lemmas \ref{l4.1} and \ref{l4.4}, we can obtain the following corollary.

\begin{corollary}\label{c4.2}
Under the assumptions of Lemma \ref{l4.1}, the scalar function $\gamma(t)$ has uniformly positive upper and lower bounds.
\end{corollary}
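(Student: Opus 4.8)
The goal is Corollary \ref{c4.2}: under the hypotheses of Lemma \ref{l4.1}, the scalar function $\gamma(t)$ has uniformly positive upper and lower bounds. Recall that $\gamma(t)=\frac{n-p}{p-1}\,C_p(\Omega_t)\big/\int_{\mathcal{S}^{n-1}} h/(f\phi(h))\,d\xi$, so the plan is simply to bound the numerator and the denominator of this quotient, uniformly in $t$, away from $0$ and $\infty$.

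First I would handle the numerator $C_p(\Omega_t)$. By Lemma \ref{l3.1} the $p$-capacity is monotone non-decreasing along the flow, so $C_p(\Omega_t)\geq C_p(\Omega_0)>0$ gives the uniform positive lower bound. For the upper bound, I would invoke monotonicity of $p$-capacity under inclusion together with the $C^0$ estimate of Lemma \ref{l4.1}: since $h(\cdot,t)\leq L$, the body $K_t$ is contained in the ball $B_L(o)$ of radius $L$, hence $C_p(\Omega_t)\leq C_p(\partial B_L)$, and the latter is a finite explicit constant (a dimensional constant times $L^{n-p}$). Thus $0<C_p(\Omega_0)\leq C_p(\Omega_t)\leq C_p(\partial B_L)<\infty$ uniformly in $t$.

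Next the denominator $D(t):=\int_{\mathcal{S}^{n-1}} h/(f\phi(h))\,d\xi$. By Lemma \ref{l4.1} we have $l\leq h(\cdot,t)\leq L$; since $f$ is a fixed positive smooth function on the compact sphere, $f$ is bounded between two positive constants; and $\phi$ is continuous and positive on $(0,\infty)$, hence bounded between two positive constants on the compact interval $[l,L]$. Therefore the integrand $h/(f\phi(h))$ is bounded above and below by positive constants depending only on $l,L,f,\phi$, and integrating over $\mathcal{S}^{n-1}$ (of finite measure) gives $0<c_1\leq D(t)\leq c_2<\infty$ uniformly in $t$. Combining the two estimates, $\gamma(t)=\frac{n-p}{p-1}\,C_p(\Omega_t)/D(t)$ lies between $\frac{n-p}{p-1}\,C_p(\Omega_0)/c_2>0$ and $\frac{n-p}{p-1}\,C_p(\partial B_L)/c_1<\infty$, which is the claim.

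I do not anticipate a serious obstacle here; this corollary is genuinely a routine consequence of the already-established $C^0$ bound (Lemma \ref{l4.1}) and the monotonicity of $C_p$ (Lemma \ref{l3.1}). The only point requiring a little care is the uniform upper bound for $C_p(\Omega_t)$: one must recall that $C_p$ is monotone with respect to set inclusion (a standard property of the $p$-capacity, immediate from its variational definition as an infimum, since a test function admissible for the larger body is admissible for the smaller one), and that $C_p$ of a ball is an explicit finite quantity. Once those two standard facts are in hand, the rest is just combining bounds on a compact domain.
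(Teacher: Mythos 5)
Your proof is correct and follows essentially the same route as the paper: bound $C_p(\Omega_t)$ above via $K_t\subset B_L$, bound it below, and use the $C^0$ estimate $l\leq h\leq L$ (with $f$, $\phi$ bounded on the relevant compact sets) to control the denominator $\int_{\mathcal{S}^{n-1}} h/(f\phi(h))\,d\xi$. The only cosmetic difference is that you obtain the lower bound on $C_p(\Omega_t)$ from the flow monotonicity of Lemma \ref{l3.1}, whereas the paper uses inclusion monotonicity with $B_l\subset K_t$ to get $C_p(B_l)\leq C_p(\Omega_t)$; both are valid and yield the same conclusion.
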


\begin{proof}
From lemma \ref{l4.1}, we know that $h(\cdot,t)$ has a uniform positive upper bound $L$, so that the hypersurface $\Omega_t$ generated by $h(\cdot,t)=L$ is enclosed by a sphere with radius $L$. By Lemma \ref{l3.1} and the homogeneity of $p$-capacity, we have
$$C_p(\Omega_t)\leq C_p(B_L)
=\omega_n\bigg(\frac{n-p}{p-1}\bigg)^{p-1}L^{n-p},$$
where $\omega_n$ denotes the surface area of the unit sphere in $\mathbb{R}^n$.
This means that $C_p(\Omega_t)$ has a positive upper bound independent of $t$.

Similarly, Since $h(\cdot,t)$ has a uniform positive lower bound $l$, then the hypersurface $\Omega_t$ generated by $h(\cdot,t)=l$ contains a sphere with radius $l$. By Lemma \ref{l3.1}, we have
$$\omega_n\bigg(\frac{n-p}{p-1}\bigg)^{p-1}l^{n-p}
=C_p(B_l)\leq C_p(\Omega_t).$$
Clear, it implies that
$C_p(\Omega_t)$ has a positive lower bound independent of $t$.

Therefore, by Lemmas \ref{l4.1}, the proof of this lemma can be obtained.
\end{proof}

\subsection{$C^2$-Estimates}

We start by estimating the lower bound of Gauss curvature, which is equivalent to estimating the upper bound of $\sigma_{n-1}(\cdot,t)=\det(\nabla_{ij}h+h\delta_{ij})$.
This estimate can be obtained by considering proper auxiliary function, see, e.g., \cite{I} for similar techniques.
Let $\alpha=fh\phi(h)|\nabla\Psi|^p$.
In order to deal with $\partial_t|\nabla\Psi|$ and simplify the calculation process, the auxiliary function in \cite{I} is obviously no longer effective. Therefore, we need to establish the following auxiliary functions:
$$\Theta(\xi,t)=\frac{1}{1-\lambda\frac{r^2}{2}}
\frac{\alpha\sigma_{n-1}}{h},$$
{for $\lambda>0$ sufficiently small.}

\begin{lemma}\label{l4.5}
Under the assumptions of Lemma \ref{l4.1}, then
$$\sigma_{n-1}(\cdot,t)\leq L_2,$$
where $L_2$ is a positive constant independent of $t$.
\end{lemma}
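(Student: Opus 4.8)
\textbf{Proof proposal for Lemma \ref{l4.5}.} The plan is to apply the parabolic maximum principle to the auxiliary function $\Theta(\xi,t)=\frac{1}{1-\lambda r^2/2}\,\frac{\alpha\sigma_{n-1}}{h}$, where $\alpha=fh\phi(h)|\nabla\Psi|^p$, and to show that at an interior spatial maximum of $\Theta$ on a time slice, the quantity $\sigma_{n-1}$ cannot be too large, whence a uniform bound on $\max_{\mathcal S^{n-1}}\Theta(\cdot,t)$ follows, and then, since the prefactor $1/(1-\lambda r^2/2)$ and $h$ are two-sidedly bounded (Lemma \ref{l4.1}) and $\alpha$ is bounded below (Lemmas \ref{l4.1} and \ref{l4.4}), a uniform upper bound $L_2$ for $\sigma_{n-1}$ is obtained. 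First I would record the evolution equation for $h$ from \eqref{3.3}, namely $\partial_t h=\alpha\sigma_{n-1}/h-\gamma(t)h$, and then differentiate $b_{ij}=\nabla_{ij}h+h\delta_{ij}$ in $t$; using $\partial_t\sigma_{n-1}=\sigma_{n-1}^{ij}\partial_t b_{ij}$ where $\sigma_{n-1}^{ij}$ is the cofactor matrix, I would derive the parabolic equation satisfied by $\sigma_{n-1}$, and correspondingly by $\Theta$. The reason for including the factor $(1-\lambda r^2/2)^{-1}$ is the classical Tso-type trick: when differentiating it along the flow one produces a good negative term (coming from $-\gamma(t)h$ and from the gradient terms $\nabla r^2 = 2(\nabla h)(\nabla_{ij}h+h\delta_{ij})\cdot$) that dominates the dangerous positive terms arising from $\sigma_{n-1}$ being large, provided $\lambda$ is chosen small depending only on the $C^0$ and $C^1$ bounds.

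The key steps, in order, are: (i) compute $\partial_t\Theta$ and, at a point where $\Theta$ attains its spatial maximum, use $\nabla\Theta=0$ and $\nabla^2\Theta\le 0$ to replace second derivatives of $h$; (ii) handle the terms involving $\partial_t|\nabla\Psi|$ via Lemma \ref{l3.3}, which gives $\partial_t\Psi=|\nabla\Psi|\,\partial_t h$ on $\Omega_t$, together with the elliptic regularity bounds of Lemma \ref{l4.4} so that $|\nabla\Psi|$ and all its derivatives up to the needed order are two-sidedly bounded independently of $t$ — this reduces the $|\nabla\Psi|^p$ factor in $\alpha$ to a harmless bounded coefficient with bounded logarithmic derivative; (iii) use $\gamma(t)\le C$ and $\gamma(t)\ge c>0$ from Corollary \ref{c4.2}; (iv) collect terms: the worst positive contribution is of the form $C\,\Theta^2$ (coming from $(\alpha\sigma_{n-1}/h)\cdot$ itself times the prefactor), while the Tso correction produces $-c\lambda\,\Theta^2$-type terms, so that at a maximum of $\Theta$ in space-time (or on an increasing sequence of maxima) we get $0\le \partial_t\Theta\le C_1\Theta - C_2\Theta^2$ after using smallness of $\lambda$, forcing $\Theta\le C_1/C_2$; (v) translate back to $\sigma_{n-1}\le L_2$.

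The main obstacle I expect is step (i)–(ii): carrying out the $\partial_t\Theta$ computation cleanly while keeping track of the extra terms generated by the $\phi(h)$ and $|\nabla\Psi|^p$ factors in $\alpha$. Unlike the pure inverse Gauss curvature flow treated in \cite{I}, here $\alpha$ depends on $h$ through $\phi(h)$ (contributing $\phi'(h)/\phi(h)$ factors, bounded by smoothness of $\phi$ and the $C^0$ bounds) and on the global quantity $\Psi$ through $|\nabla\Psi|^p$, whose time derivative is not local in the curvature; Lemma \ref{l3.3} is exactly what makes this term manageable, since it expresses $\partial_t\Psi$ back in terms of $\partial_t h=\Theta h(1-\lambda r^2/2)/\,\cdot\, - \gamma h$, but one must check that the resulting terms are at most linear in $\sigma_{n-1}$ (hence $O(\Theta)$) and so are absorbed. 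A secondary technical point is ensuring the choice of $\lambda$ is uniform in $t$: this is fine because it depends only on the constants $l,L,L_0,\hat l,\tilde l,\bar l$ and the bounds on $\gamma(t)$, all of which are $t$-independent by Lemmas \ref{l4.1}, \ref{l4.3}, \ref{l4.4} and Corollary \ref{c4.2}. Once the differential inequality $\partial_t(\max\Theta)\le C_1\max\Theta-C_2(\max\Theta)^2$ is in hand, a standard ODE comparison gives the uniform bound on $\Theta$ and therefore on $\sigma_{n-1}$.
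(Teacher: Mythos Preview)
Your overall strategy---the Ivaki-type auxiliary function $\Theta=(1-\lambda r^2/2)^{-1}\alpha\sigma_{n-1}/h$, spatial maximum principle, and feeding in Lemmas~\ref{l4.1}, \ref{l4.3}, \ref{l4.4}, \ref{l3.3} and Corollary~\ref{c4.2}---matches the paper exactly. But the mechanism you describe for the ``good'' negative term is wrong, and this is not a cosmetic issue: it is the point where the proof either closes or fails.

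You claim the $r^2$-correction produces $-c\lambda\,\Theta^2$ which beats a worst term $+C\,\Theta^2$, leading to $\partial_t\Theta\le C_1\Theta-C_2\Theta^2$. In fact the computation (see the paper's (4.10)--(4.12)) yields a positive $\Theta^2$ term whose coefficient contains $n-1+h\phi'/\phi$ and further $O(1)$ contributions; this cannot be killed by anything proportional to the small parameter $\lambda$. Neither $-\gamma(t)h$ nor the first derivatives $\nabla_i(r^2/2)$ supply the dominating term. What actually saves the argument is the second-order piece $\alpha\,c_{ij}\nabla_{ij}(r^2/2)$: using $r^2=h^2+|\nabla h|^2$ one finds the contraction $\sum c_{ij}b_{ik}b_{jk}$, and by the arithmetic--geometric mean (Newton--Maclaurin) inequality $\sum c_{ij}b_{ik}b_{jk}\ge l_0\,\sigma_{n-1}^{\,1+1/(n-1)}$. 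This produces a \emph{higher-order} negative term, and the correct differential inequality at the maximum is
\[
\partial_t\Theta \;\le\; l_1\Theta + l_2\Theta^2 - l_3\,\Theta^{\,2+\frac{1}{n-1}},
\]
from which boundedness of $\Theta$ follows. Without identifying this $\sigma_{n-1}^{1+1/(n-1)}$ term your step (iv) does not go through. (Minor slip: from \eqref{3.3} one has $\partial_t h=\alpha\sigma_{n-1}-\gamma h$, not $\alpha\sigma_{n-1}/h-\gamma h$.)
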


\begin{proof}
Let $c_{ij}$ be the cofactor matrix of $(h_{ij}+h\delta_{ij})$ with $\sum_{i,j}c_{ij}(h_{ij}+h\delta_{ij})=(n-1)\sigma_{n-1}$.
Suppose the spatial maximum of $\Theta$ is obtained at $(\hat{\xi}_{\hat{t}},\hat{t})$. Then at this point, we have
\begin{align}\label{4.6}
\nabla_i\Theta=0,\ \ i.e., \ \ \nabla_i\bigg(\frac{\alpha\sigma_{n-1}}{h}\bigg)
+\frac{\alpha\sigma_{n-1}}{h}
\frac{\lambda}{1-\lambda\frac{r^2}{2}}
\nabla_i\bigg(\frac{r^2}{2}\bigg)=0,
\end{align}
and
\begin{align}\label{4.7}
\nabla_{ij}\Theta\leq0.
\end{align}

Now we estimate $\Theta$, using (\ref{4.7}), we have
\begin{align}\label{4.8}
\nonumber\partial_t\Theta&\leq\partial_t\Theta-\alpha c_{ij}\nabla_{ij}\Theta\\
&\nonumber=\partial_t\bigg(\frac{1}{1-\lambda\frac{r^2}{2}}
\frac{\alpha\sigma_{n-1}}{h}\bigg)
-\alpha c_{ij}\nabla_{ij}\bigg(\frac{1}{1-\lambda\frac{r^2}{2}}
\frac{\alpha\sigma_{n-1}}{h}\bigg)\\
&\nonumber=\frac{1}{1-\lambda\frac{r^2}{2}}\bigg[\partial_t
\bigg(\frac{\alpha\sigma_{n-1}}{h}\bigg)-\alpha c_{ij}\nabla_{ij}
\bigg(\frac{\alpha\sigma_{n-1}}{h}\bigg)\bigg]\\
&\ \ +\frac{\lambda}{(1-\lambda\frac{r^2}{2})^2}
\frac{\alpha\sigma_{n-1}}{h}\bigg
[\partial_t\bigg(\frac{r^2}{2}\bigg)
-\alpha c_{ij}\nabla_{ij}\bigg(\frac{r^2}{2}\bigg)\bigg]\\
&\nonumber\ \ -2\alpha c_{ij}\frac{\lambda}{(1-\lambda\frac{r^2}{2})^2}
\nabla_i\bigg(\frac{\alpha\sigma_{n-1}}{h}\bigg)\nabla_j
\bigg(\frac{r^2}{2}\bigg)\\
&\nonumber\ \ -2\alpha c_{ij}\frac{\lambda^2}{(1-\lambda\frac{r^2}{2})^3}
\frac{\alpha\sigma_{n-1}}{h}\nabla_i\bigg
(\frac{r^2}{2}\bigg)\nabla_j
\bigg(\frac{r^2}{2}\bigg).
\end{align}
Substituting (\ref{4.6}) into (\ref{4.8}), we have
\begin{align}\label{4.8.1}
\nonumber\partial_t\Theta&\leq
\frac{1}{1-\lambda\frac{r^2}{2}}\bigg[\partial_t
\bigg(\frac{\alpha\sigma_{n-1}}{h}\bigg)-\alpha c_{ij}\nabla_{ij}
\bigg(\frac{\alpha\sigma_{n-1}}{h}\bigg)\bigg]\\
&\ \ +\frac{\lambda}{(1-\lambda\frac{r^2}{2})^2}
\frac{\alpha\sigma_{n-1}}{h}\bigg
[\partial_t\bigg(\frac{r^2}{2}\bigg)
-\alpha c_{ij}\nabla_{ij}\bigg(\frac{r^2}{2}\bigg)\bigg].
\end{align}

Next, we need to calculate $\partial_t
(\frac{\alpha\sigma_{n-1}}{h})-\alpha c_{ij}\nabla_{ij}
(\frac{\alpha\sigma_{n-1}}{h})$ and $\partial_t(\frac{r^2}{2})
-\alpha c_{ij}\nabla_{ij}(\frac{r^2}{2})$.
Before this, we calculate some facts as follows.
\begin{align*}
&\sigma_{n-1}\partial_t\alpha+\alpha\partial_t\sigma_{n-1}\\
&=\sigma_{n-1}(f\phi(h)|\nabla\Psi|^p\partial_th
+fh|\nabla\Psi|^p\phi^\prime(h)\partial_th
+fh\phi(h)\partial_t|\nabla\Psi|^p)+\alpha c_{ij}(\nabla_{ij}(\partial_th)+\delta_{ij}\partial_th)\\
&=f\phi(h)|\nabla\Psi|^p\sigma_{n-1}(\alpha\sigma_{n-1}-\gamma h)+fh|\nabla\Psi|^p\phi^\prime(h)\sigma_{n-1}
(\alpha\sigma_{n-1}-\gamma h)\\
&\ \ +pfh\phi(h)\sigma_{n-1}|\nabla\Psi|^{p-1}
\partial_t|\nabla\Psi|
+\alpha c_{ij}\nabla_{ij}(\alpha\sigma_{n-1})
+\alpha^2c_{ij}\delta_{ij}\sigma_{n-1}
-\gamma\alpha c_{ij}(\nabla_{ij}h+h\delta_{ij}).
\end{align*}
Here
\begin{align}\label{4.9}
\nonumber\partial_t|\nabla\Psi|&=-[(\nabla^2\Psi)\xi
\cdot(\nabla_i(\partial_th)e_i
+\partial_th\xi)]-\nabla(\partial_t\Psi)\cdot\xi\\
&=-(\nabla^2\Psi)\xi\cdot[\nabla_i(\alpha\sigma_{n-1}-\gamma h)e_i+(\alpha\sigma_{n-1}-\gamma h)\xi]-\nabla(\partial_t\Psi)\cdot\xi.
\end{align}
From Lemma \ref{l3.3}
\begin{align}\label{4.9.1}
\nonumber\nabla(\partial_t\Psi)\cdot\xi&
=\nabla(|\nabla\Psi|\partial_th)\cdot\xi\\
&\nonumber=(|\nabla\Psi|^{-1}\nabla\Psi\nabla^2\Psi\cdot\xi)
(\alpha\sigma_{n-1}-\gamma h)+|\nabla\Psi|\partial_t(\nabla h)\cdot\xi\\
&=(|\nabla\Psi|^{-1}\nabla\Psi\nabla^2\Psi\cdot\xi)
(\alpha\sigma_{n-1}-\gamma h)+|\nabla\Psi|\partial_t(\nabla_ihe_i+h\xi)\cdot\xi\\
&\nonumber=(|\nabla\Psi|^{-1}\nabla\Psi\nabla^2\Psi\cdot\xi)
(\alpha\sigma_{n-1}-\gamma h)+|\nabla\Psi|(\alpha\sigma_{n-1}-\gamma h).
\end{align}
Substituting (\ref{4.9.1}) into (\ref{4.9}), we have
\begin{align*}
\partial_t|\nabla\Psi|
&=-(\nabla^2\Psi)\xi\cdot\nabla_i(\alpha\sigma_{n-1}-\gamma h)e_i-
(\alpha\sigma_{n-1}-\gamma h)(\nabla^2\Psi)\xi\cdot\xi\\
&\ \ \ \ -(|\nabla\Psi|^{-1}\nabla\Psi\nabla^2\Psi\cdot\xi)
(\alpha\sigma_{n-1}-\gamma h)-|\nabla\Psi|(\alpha\sigma_{n-1}-\gamma h).
\end{align*}
This, together with the fact $\sum_{i,j}c_{ij}(\nabla_{ij}h+h\delta_{ij})=(n-1)\sigma_{n-1}$, yields
\begin{align}\label{4.9.2}
&\nonumber\sigma_{n-1}\partial_t\alpha
+\alpha\partial_t\sigma_{n-1}\\
&\nonumber=\bigg(\frac{1}{h}+\frac{\phi^\prime(h)}
{\phi(h)}\bigg)(\alpha\sigma_{n-1})^2
-\bigg(n+h\frac{\phi^\prime(h)}{\phi(h)}\bigg)
\gamma\alpha\sigma_{n-1}+\alpha^2\sigma_{n-1} c_{ij}\delta_{ij}
+\alpha c_{ij}\nabla_{ij}(\alpha\sigma_{n-1})\\
&\ \ -p\alpha\sigma_{n-1}|\nabla\Psi|^{-1}
[(\nabla^2\Psi)\xi\cdot
(\beta\alpha\sigma_{n-1}r\nabla_ir)e_i
+(\alpha\sigma_{n-1}-\gamma h)(\nabla^2\Psi)\xi\cdot\xi]\\
&\nonumber\ \ -p\alpha\sigma_{n-1}[(|\nabla\Psi|^{-2}
\nabla\Psi\nabla^2\Psi\cdot\xi)
(\alpha\sigma_{n-1}-\gamma h)+
(\alpha\sigma_{n-1}-\gamma h)],
\end{align}
and
\begin{align}\label{4.9.3}
\nonumber\nabla_{ij}\bigg(\frac{\alpha\sigma_{n-1}}{h}\bigg)
&=\frac{1}{h}\nabla_{ij}(\alpha\sigma_{n-1})
-\frac{1}{h^2}(\alpha\sigma_{n-1})\nabla_{ij}h\\
&\ \ -\frac{2}{h^2}\nabla_i(\alpha\sigma_{n-1})\nabla_jh
+\frac{2}{h^3}(\alpha\sigma_{n-1})\nabla_ih\nabla_jh.
\end{align}
From (\ref{4.9.2}) and (\ref{4.9.3}), we have
\begin{align}\label{4.10}
&\nonumber\partial_t\bigg(\frac{\alpha
\sigma_{n-1}}{h}\bigg)-\alpha c_{ij}\nabla_{ij}\bigg(\frac{\alpha\sigma_{n-1}}{h}\bigg)\\
&\nonumber=\frac{1}{h}\partial_t(\alpha\sigma_{n-1})
-\frac{1}{h^2}\alpha\sigma_{n-1}\partial_th
-\alpha c_{ij}\nabla_{ij}
\bigg(\frac{\alpha\sigma_{n-1}}{h}\bigg)\\
&\nonumber=\frac{1}{h}\bigg(\partial_t
(\alpha\sigma_{n-1})-\alpha c_{ij}\nabla_{ij}(\alpha\sigma_{n-1})\bigg)
-\bigg(\frac{\alpha\sigma_{n-1}}{h}\bigg)^2
+\frac{\gamma}{h}\alpha\sigma_{n-1}\\
&\ \ +\frac{1}{h^2}\alpha c_{ij}(\alpha\sigma_{n-1})\nabla_{ij}h
+\frac{2}{h^2}\alpha c_{ij}\nabla_i(\alpha\sigma_{n-1})\nabla_jh
-\frac{2}{h^3}\alpha c_{ij}(\alpha\sigma_{n-1})\nabla_ih\nabla_jh\\
&\nonumber=\bigg(\frac{n-1}{h}+\frac{\phi^\prime(h)}
{\phi(h)}\bigg)\frac{(\alpha\sigma_{n-1})^2}{h}
-\bigg(n-1+h\frac{\phi^\prime(h)}{\phi(h)}\bigg)
\frac{\gamma\alpha\sigma_{n-1}}{h}+2\alpha c_{ij}\frac{\nabla_jh}{h}\nabla_i\bigg
(\frac{\alpha\sigma_{n-1}}{h}\bigg)\\
&\nonumber\ \ -pf\phi\sigma_{n-1}|\nabla\Psi|^{p-1}[(\nabla^2\Psi)\xi\cdot
(\alpha\sigma_{n-1}-\gamma h)_ie_i+(\alpha\sigma_{n-1}-\gamma h)(\nabla^2\Psi)\xi\cdot\xi]\\
&\nonumber\ \ -pf\phi\sigma_{n-1}|\nabla\Psi|^{p}
[(|\nabla\Psi|^{-2}\nabla\Psi\nabla^2\Psi\cdot\xi)
(\alpha\sigma_{n-1}-\gamma h)+(\alpha\sigma_{n-1}-\gamma h)].
\end{align}

Moreover, since $r^2=h^2+|\nabla h|^2$, then
\begin{align*}
\partial_t\bigg(\frac{r^2}{2}\bigg)&
=\partial_t\bigg(\frac{h^2}{2}\bigg)
+\partial_t\bigg(\frac{|\nabla h|^2}{2}\bigg)\\
&=h\partial_th+\sum\nabla_ih\nabla_i(\partial_th)\\
&=h\alpha\sigma_{n-1}
+\sigma_{n-1}\sum\nabla_ih\nabla_i\alpha
+\alpha\sum\nabla_ih\nabla_i\sigma_{n-1}-\gamma r^2,
\end{align*}
and
\begin{align*}
c_{ij}\nabla_{ij}\bigg(\frac{r^2}{2}\bigg)
&=c_{ij}(h\nabla_{ij}h+\nabla_ih\nabla_jh
+\sum h_k\nabla_ih_{kj}+\sum h_{ik}h_{jk})\\
&=h[(n-1)\sigma_{n-1}-c_{ij}\delta_{ij}h]
+c_{ij}\nabla_ih\nabla_jh+\sum h_k\nabla_k\sigma_{n-1}\\
&\ \ -c_{ij}\nabla_ih\nabla_jh+\sum c_{ij}b_{ik}b_{jk}+c_{ij}\delta_{ij}h^2-2(n-1)h\sigma_{n-1}\\
&=-(n-1)h\sigma_{n-1}+\sum h_k\nabla_k\sigma_{n-1}+\sum c_{ij}b_{ik}b_{jk}.
\end{align*}
Thus
\begin{align}\label{4.11}
\partial_t\bigg(\frac{r^2}{2}\bigg)-\alpha c_{ij}\nabla_{ij}\bigg(\frac{r^2}{2}\bigg)
=nh\alpha\sigma_{n-1}+\sigma_{n-1}\sum\nabla_ih\nabla_i\alpha
-\alpha\sum c_{ij}b_{ik}b_{jk}-\gamma r^2.
\end{align}
Using the arithmetic-geometric mean inequality, we have
$$\sum c_{ij}b_{ik}b_{jk}\geq l_0\sigma_{n-1}^{1+\frac{1}{n-1}}$$
for some positive constant $l_0$.

Since $c_{ij}\frac{\nabla_jh}{h}\nabla_i
(\frac{\alpha\sigma_{n-1}}{h})\leq0$ from (\ref{4.6}), and
$$\nabla_i(\alpha\sigma_{n-1}-\gamma h)=\frac{1}{h}\alpha\sigma_{n-1}\nabla_ih
-\frac{\lambda\alpha\sigma_{n-1}}
{1-\lambda\frac{r^2}{2}}r\nabla_ir
-\gamma\nabla_ih,$$
it follows from (\ref{4.8.1}), (\ref{4.10}) and (\ref{4.11}) that
\begin{align*}
\partial_t\Theta&\leq\frac{1}{1-\lambda\frac{r^2}{2}}
\bigg\{\bigg(\frac{n-1}{h}+\frac{\phi^\prime}{\phi}\bigg)
\frac{(\alpha\sigma_{n-1})^2}{h}
-(n-1+h\frac{\phi^\prime}{\phi})
\frac{\gamma\alpha\sigma_{n-1}}{h}\\
&\ \ \ -\frac{p\alpha\sigma_{n-1}}{h|\nabla\Psi|}
\bigg[\bigg(\frac{\nabla_ih}{h}\alpha\sigma_{n-1}
-\frac{\lambda r\alpha\sigma_{n-1}\nabla_ir}
{1-\lambda\frac{r^2}{2}}-\gamma\nabla_ih\bigg)\\
&\ \ \ \ \ \ \ \ \ (\nabla^2\Psi)\xi\cdot e_i
+(\alpha\sigma_{n-1}-\gamma h)
(\nabla^2\Psi)\xi\cdot\xi\bigg]\\
&\ \ \ -\frac{p\alpha\sigma_{n-1}}{h}
\bigg((|\nabla\Psi|^{-2}\nabla\Psi\nabla^2\Psi\cdot\xi)
(\alpha\sigma_{n-1}-\gamma h)
+(\alpha\sigma_{n-1}-\gamma h)\bigg)\bigg\}\\
&\ \ \ +\frac{\lambda}{(1-\lambda\frac{r^2}{2})^2}
\frac{\alpha\sigma_{n-1}}{h}\bigg[nh\alpha\sigma_{n-1}
-\alpha\sum\nabla_kh\nabla_k\sigma_{n-1}
-\alpha\sum c_{ij}b_{ik}b_{ik}-\gamma r^2\bigg],
\end{align*}
i.e.,
\begin{align}\label{4.12}
\nonumber\partial_t\Theta&\leq\bigg[\frac{(\nabla_ih+2ph)p\lambda
|\nabla^2\Psi|}{|\nabla\Psi|}+p\gamma h\bigg]
\frac{\alpha\sigma_{n-1}}{h(1-\lambda\frac{r^2}{2})}\\
&\ \ \ +\bigg[n+h\frac{\phi^\prime}{\phi}
+\frac{\lambda h(pr\nabla_ir|\nabla^2\Psi|
+nh|\nabla\Psi|)}{|\nabla\Psi|}\bigg]
\bigg(\frac{\alpha\sigma_{n-1}}
{h(1-\lambda\frac{r^2}{2})}\bigg)^2\\
\nonumber&\ \ \
-l_0\lambda\bigg(\frac{h^n}{\alpha}\bigg)^{\frac{1}{n-1}}
\bigg(\frac{\alpha\sigma_{n-1}}
{h(1-\lambda\frac{r^2}{2})}\bigg)^{2+\frac{1}{n-1}}.
\end{align}

Since $\varphi^\prime(h)=\frac{1}{\phi(h)}>0$ implies that $\varphi(h)$ strictly increasing. Combining with Lemma \ref{l4.1}, we know that $\varphi(h)$ has positive upper and lower bounds, which also shows that $\phi(h)$ has positive upper and lower bounds.
Using the previous estimates in Section 4.1, the definition of $\lambda$, and $p\in(1,n)$, we find that there exist positive constants $l_1$ depending on Lemmas \ref{l4.1}, \ref{l4.3} and \ref{l4.4}, as well as Corollary \ref{c4.2} such that
$$\frac{(\nabla_ih+2ph)p\lambda
|\nabla^2\Psi|}{|\nabla\Psi|}+p\gamma h\leq l_1,$$
and that there exists a positive constant $l_2$ depending on Lemmas \ref{l4.1}, \ref{l4.3} and \ref{l4.4} such that
$$n+h\frac{\phi^\prime}{\phi}
+\frac{\lambda h(pr\nabla_ir|\nabla^2\Psi|
+nh|\nabla\Psi|)}{|\nabla\Psi|}\leq l_2,$$
and that there exists a positive constant $l_3$ depending on Lemmas \ref{l4.1} and \ref{l4.4} such that
$$l_0\lambda\bigg(\frac{h^n}{\alpha}\bigg)^{\frac{1}{n-1}}
\geq l_3.$$

Therefore, (\ref{4.12}) can be further estimated as
$$\partial_t\Theta\leq l_1\Theta+l_2\Theta^2-l_3\Theta^{2+\frac{1}{n-1}}.$$
By the maximum principle, then we have
$$\Theta(\hat{\xi}_{\hat{t}},\hat{t})\leq L_2,$$
for some positive constant $L_2$ independent of $t$.
Since $\sigma_{n-1}=G^{-1}$, thus we obtain a uniformly positive lower bound for Gauss curvature.
\end{proof}

From  Lemma \ref{l4.1},
as discussed in Section 2 (or see \cite{U}), we know that the eigenvalues of  matrix $\{b_{ij}\}$ are positive, i.e. $\{b_{ij}\}$ is positive definite, and the principal curvatures are the eigenvalues of $\{b^{ij}\}$.
Therefore, to derive a positive upper bound of principal curvatures of $F(\cdot,t)$, it is equivalent to estimate the upper bound of the eigenvalues of $\{b^{ij}\}$.

\begin{lemma}\label{l4.6}
Under the assumptions of Lemma \ref{l4.1}. Then the principal curvature
$$\kappa_i(\cdot,t)\leq L_3,\ \ i=1,\cdots,n-1,$$
where $L_3$ is a positive constant independent of $t$.
\end{lemma}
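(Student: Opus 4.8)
The plan is to bound the largest eigenvalue of $\{b^{ij}\}$ (equivalently, the largest principal curvature $\kappa_{\max}$) by a tensor maximum principle applied along the flow (\ref{3.3}). Following the standard strategy for such $C^2$-estimates (as in the references \cite{I} and the flows of \cite{BIS,CH,CWX,CW}), I would introduce an auxiliary function of the form
\begin{align*}
W(\xi,t)=\log\kappa_{\max}(b^{ij})(\xi,t)-A\log h(\xi,t)+\frac{B}{2}|\nabla h|^2,
\end{align*}
or, to avoid the non-smoothness of $\kappa_{\max}$, work instead with $W(\xi,t,\zeta)=\log\big(b^{ij}\zeta_i\zeta_j/|\zeta|^2_{b}\big)+\cdots$ for a unit vector $\zeta$, and fix at the spatial maximum a coordinate frame in which $\{b_{ij}\}$ is diagonal. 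Here $A,B>0$ are constants to be chosen, $h$ is under control by Lemma \ref{l4.1}, $|\nabla h|$ by Lemma \ref{l4.3}, $\sigma_{n-1}$ by Lemma \ref{l4.5}, $|\nabla\Psi|$ and its higher derivatives by Lemma \ref{l4.4}, and $\gamma(t)$ by Corollary \ref{c4.2}.

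First I would compute the evolution equation $\partial_t b_{ij}$ along (\ref{3.3}), differentiating $b_{ij}=\nabla_{ij}h+h\delta_{ij}$ twice in the spatial variables and using $\partial_t h=\alpha\sigma_{n-1}-\gamma h$ with $\alpha=fh\phi(h)|\nabla\Psi|^p$. This produces a parabolic operator $\mathcal{L}=\partial_t-\alpha\sigma_{n-1} b^{kl}\nabla_{kl}$ (the linearization of the Gauss-curvature operator), modulo lower-order terms coming from the Ricci identity on $\mathcal{S}^{n-1}$ (which contribute the usual $\kappa_{\max}$-linear and curvature terms) and from the $\xi$-derivatives of $f$, $\phi(h)$, $|\nabla\Psi|^p$ and $\gamma(t)$. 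Then I would evaluate $\mathcal{L}W$ at an interior spatial maximum $(\hat\xi,\hat t)$ of $W$, where $\nabla_i W=0$ and $\nabla_{ij}W\leq0$ supply the first- and second-order relations; the term $-A\log h$ is the standard device that, via the concavity/positivity of the Gauss-curvature operator, produces a good negative term $-cA\,\sigma_{n-1}\sum_k b^{kk}$ (equivalently $-cA$ times the trace of the inverse second fundamental form), which is used to absorb the bad positive terms, while $\frac{B}{2}|\nabla h|^2$ handles the gradient terms generated by the $\Psi$-derivatives. The upshot is a differential inequality of the shape
\begin{align*}
\partial_t W(\hat\xi,\hat t)\leq C_1+C_2\,\kappa_{\max}-C_3\,\kappa_{\max}\sum_k b^{kk},
\end{align*}
with $C_1,C_2,C_3>0$ independent of $t$; since $\sum_k b^{kk}\geq\kappa_{\max}$ and $\sum_k b^{kk}\geq (n-1)(\sigma_{n-1})^{1/(n-1)}$ is bounded below, if $\kappa_{\max}$ were large the right-hand side would be negative, and the maximum principle forces $\kappa_{\max}(\hat\xi,\hat t)\leq L_3$, hence $\kappa_i(\cdot,t)\leq L_3$ for all $i$ and all $t$.

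The main obstacle is controlling the terms generated by $|\nabla\Psi|^p$ and $\gamma(t)$: unlike the classical Gauss-curvature or inverse-curvature flows, here $\alpha$ carries the factor $|\nabla\Psi(F(\xi,t),t)|^p$, whose spatial derivatives involve $\nabla^2\Psi$ and $\nabla^3\Psi$ evaluated along the moving boundary (through the chain rule with $\partial_i F=\nabla_i(\nabla h)+\cdots$), and whose time derivative was already delicate in Lemma \ref{l4.5}. One must show these contribute only terms that are at most linear in $\kappa_{\max}$ (up to factors controlled by Lemma \ref{l4.4}), so that they are absorbable by the $-C_3\kappa_{\max}\sum_k b^{kk}$ term. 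This requires carefully expanding $\nabla_i\nabla_j\big(|\nabla\Psi|^p\big)$ in the flow coordinates and invoking the uniform bounds on $|\nabla\Psi|,|\nabla^2\Psi|,|\nabla^3\Psi|$ from Lemma \ref{l4.4} together with the $C^0$–$C^1$ bounds; once this bookkeeping is done, the coefficients $C_1,C_2,C_3$ can be taken uniform in $t$ and the maximum-principle argument closes. Combined with Lemma \ref{l4.5}, this also yields a uniform positive lower bound for $\kappa_i$, so $\{b^{ij}\}$ has eigenvalues in a fixed compact subset of $(0,\infty)$, completing the $C^2$-estimates.
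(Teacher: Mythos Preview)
Your proposal follows essentially the same approach as the paper: the paper's auxiliary function is $\overline{\mathcal{E}}=\log\zeta_{\max}(\{b^{ij}\})-a\log h+\tfrac{s}{2}r^2$ (with $r^2=h^2+|\nabla h|^2$ in place of your $|\nabla h|^2$, a cosmetic difference), and the argument proceeds by taking the logarithm of the flow equation, differentiating twice, expanding $\nabla_{11}|\nabla\Psi|$ via the chain rule (the resulting third-order terms $b_{i11}$ are eliminated using the first-order condition $\nabla_i\mathcal{E}=0$, producing $\sum b_{ii}$ contributions absorbed by the $r^2$ barrier), and closing with the choice $a>\widehat{c}_3$. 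One small correction: the good term coming from $-A\log h$ is $-C_3\sum_k b^{kk}$ rather than $-C_3\,\kappa_{\max}\sum_k b^{kk}$, so the argument closes by taking $A$ large (so that $C_3>C_2$) rather than by a quadratic-versus-linear comparison.
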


\begin{proof}
We study the following auxiliary function
\begin{align}\label{4.13}
\overline{\mathcal{E}}(\xi,t)=\log\zeta_{\max}
(\{b^{ij}\})-a\log h+\frac{s}{2}r^2,
\end{align}
where $a$ and $s$ are positive constants to be specified later, and $\zeta_{\max}$ is the maximal eigenvalue of $\{b^{ij}\}$.
We suppose the spatial maximum of $\overline{\mathcal{E}}(\xi,t)$ is attained at $\xi_0\in\mathcal{S}^{n-1}$ for $t>0$. By a rotation of coordinates, suppose $\{b^{ij}(\xi_0,t)\}$ is diagonal, and $\zeta_{\max}=b^{11}(\xi_0,t)$. Then, (\ref{4.13}) can be rewritten as
$$\mathcal{E}(\xi,t)=\log b^{11}-a\log h+\frac{s}{2}r^2.$$
Next it is sufficient to prove that $\mathcal{E}(\cdot,t)$ has a positive upper bound.
For convenience, we write $\nabla_{ij}h=h_{ij}$ and $\nabla_{ij}r=r_{ij}$.

At the point $\xi_0$, we have
\begin{align}\label{4.14}
\nonumber0=\nabla_i\mathcal{E}
&=-b^{11}\nabla_ib_{11}-a\frac{h_i}{h}+srr_i\\
&=-b^{11}\nabla_i(h_{11}+h\delta_{11})-a\frac{h_i}{h}+srr_i.
\end{align}
and
\begin{align}\label{4.15}
0\geq\nabla_{ij}\mathcal{E}=-b^{11}
\nabla_{ij}b_{11}+(b^{11})^2(\nabla_ib_{11})^2
-a\bigg(\frac{h_{ij}}{h}-\frac{h_i^2}
{h^2}\bigg)+sr_i^2+srr_{ij}.
\end{align}
Furthermore, for $t>0$,
\begin{align}\label{4.16}
\nonumber\partial_t\mathcal{E}&=-b^{11}\partial_tb_{11}
-a\frac{\partial_th}{h}+sr\partial_tr\\
&=-b^{11}((\partial_th)_{11}+\partial_th)
-a\frac{\partial_th}{h}+sr\partial_tr.
\end{align}

From Equation (\ref{3.3}), we write
\begin{align}\label{4.17}
\log(\partial_th+\gamma h)=\log\sigma_{n-1}+\Lambda(\xi,t),
\end{align}
where
$$\Lambda(\xi,t)=\log(fh\phi(h)|\nabla\Psi|^p).$$
Differentiating (\ref{4.17}),
\begin{align}\label{4.18}
\frac{(\partial_th)_j+\gamma h_j}{\partial_th+\gamma h}
=\sum b^{ik}\nabla_jb_{ik}+\nabla_j\Lambda,
\end{align}
and
\begin{align}\label{4.19}
\frac{(\partial_th)_{11}+\gamma h_{11}}{\partial_th+\gamma h}
-\frac{(\gamma h_1+(\partial_th)_1)^2}{(\partial_th+\gamma h)^2}
=\sum b^{ii}\nabla_{11}b_{ii}-\sum b^{ii}b^{jj}(\nabla_1b_{ij})^2+\nabla_{11}\Lambda.
\end{align}
By the Ricci identity on $\mathcal{S}^{n-1}$
$$\nabla_{11}b_{ij}=\nabla_{ij}b_{11}-\delta_{ij}b_{11}
+\delta_{11}b_{ij}-\delta_{1i}b_{1j}+\delta_{1j}b_{1i},$$
and (\ref{4.15}), (\ref{4.16}), (\ref{4.18}) and (\ref{4.19}), at $\xi_0$, then
\begin{align}\label{4.20}
\nonumber\frac{\partial_t\mathcal{E}}{\partial_th+\gamma h}
&=\frac{-b^{11}((\partial_th)_{11}+\partial_th)}
{\partial_th+\gamma h}
-a\frac{\partial_th}{h(\partial_th+\gamma h)}
+s\frac{r\partial_tr}{\partial_th+\gamma h}\\
&\nonumber=-b^{11}\bigg(\frac{(\partial_th)_{11}+\gamma h_{11}
-\gamma h_{11}-\gamma h+\gamma h+\partial_th}{\partial_th+\gamma h}\bigg)\\
&\nonumber\ \ \ -a\frac{\partial_th}{h(\partial_th+\gamma h)}
+s\frac{r\partial_tr}{\partial_th+\gamma h}\\
&\nonumber=-b^{11}\frac{(\partial_th)_{11}+\gamma h_{11}}{\partial_th+\gamma h}
+\frac{\gamma}{\partial_th+\gamma h}-b^{11}-\frac{a}{h}+\frac{a\gamma}{\partial_th+\gamma h}+
s\frac{r\partial_tr}{\partial_th+\gamma h}\\
&\nonumber\leq-b^{11}\sum b^{ii}\nabla_{11}b_{ii}+b^{11}\sum b^{ii}b^{jj}(\nabla_1b_{ij})^2
-b^{11}\nabla_{11}\Lambda\\
&\ \ \ +\frac{\gamma(1+a)}{\partial_th+\gamma h}
+s\frac{r\partial_tr}{\partial_th+\gamma h}\\
&\nonumber=-b^{11}\sum b^{ii}(\nabla_{ii}b_{11}-b_{11}+b_{ii})
+b^{11}\sum b^{ii}b^{jj}(\nabla_1b_{ij})^2
-b^{11}\nabla_{11}\Lambda\\
&\nonumber\ \ \ +\frac{\gamma(1+a)}{\partial_th+\gamma h}
+s\frac{r\partial_tr}{\partial_th+\gamma h}\\
&\nonumber\leq-\sum b^{ii}(b^{11})^2(\nabla_ib_{11})^2+\sum b^{ii}a\bigg(\frac{h_{ii}}{h}
-\frac{h_i^2}{h^2}\bigg)-\sum b^{ii}sr_i^2\\
&\nonumber\ \ \ -\sum b^{ii}srr_{ii}+b^{11}\sum b^{ii}b^{jj}(\nabla_1b_{ij})^2
-b^{11}\nabla_{11}\Lambda\\
&\nonumber\ \ \ +\frac{\gamma(1+a)}{\partial_th+\gamma h}
+s\frac{r\partial_tr}{\partial_th+\gamma h}+\sum b^{ii}-(n-1)b^{11}\\
&\nonumber\leq-a\sum b^{ii}+\frac{(n-1)a}{h}-b^{11}\nabla_{11}\Lambda
+\frac{\gamma(1+a)}{\partial_th+\gamma h}\\
&\nonumber\ \ \ +s\bigg(\frac{r\partial_tr}{\partial_th+\gamma h}
-\sum b^{ii}(r_i^2+rr_{ii})\bigg),
\end{align}
where
\begin{align}\label{4.21}
\nonumber&\partial_tr=\frac{h\partial_th+\sum h_k(\partial_th)_k}{r},\\
&r_i=\frac{hh_i+\sum h_kh_{ki}}{r}=\frac{h_ib_{ii}}{r},\\
\nonumber&r_{ij}=\frac{hh_{ij}+h_ih_j+\sum h_kh_{kij}+\sum h_{ki}h_{kj}}{r}-\frac{h_ih_jb_{ii}b_{jj}}{r^3}.
\end{align}
Thus
\begin{align}\label{4.22}
&\nonumber\frac{r\partial_tr}{\partial_th+\gamma h}
-\sum b^{ii}(r_i^2+rr_{ii})\\
&=\frac{h\partial_th}{\partial_th+\gamma h}-h\sum b^{ii}h_{ii}-b^{ii}\sum h_{ii}^2
-\frac{\gamma|\nabla h|^2}{\partial_th+\gamma h}+\sum h_k\nabla_k\Lambda\\
&\nonumber =nh-\frac{\gamma r^2}{\partial_th+\gamma h}
-\sum b_{ii}+\sum h_k\nabla_k\Lambda.
\end{align}
Substituting (\ref{4.22}) into (\ref{4.20})
\begin{align}\label{4.23}
\nonumber\frac{\partial_t\mathcal{E}}{\partial_th+\gamma h}
&\leq -a\sum b^{ii}+nh(a+s)+\frac{\gamma(1+a-sr^2)}{\partial_th+\gamma h}
-s\sum b_{ii}\\
&\ \ \ -b^{11}\nabla_{11}\Lambda+s\sum h_k\nabla_k\Lambda.
\end{align}

Next we calculate $-b^{11}\nabla_{11}\Lambda$ and $s\sum h_k\nabla_k\Lambda$.
From the expression for $\Lambda(\xi,t)$, we have
$$\nabla_k\Lambda=\frac{f_k}{f}
+\frac{h_k}{h}+\frac{\phi^\prime(h)}{\phi(h)}h_k
+p\frac{|\nabla\Psi|_k}{|\nabla\Psi|},$$
and
\begin{align*}
\nabla_{kl}\Lambda&=\frac{ff_{kl}-f_kf_l}{f^2}
+\frac{hh_{kl}-h_kh_l}{h^2}+\frac{\phi^{\prime\prime}
h_kh_l+\phi^\prime h_{kl}}{\phi}-\frac{(\phi^\prime)^2h_kh_l}{\phi^2}\\
&\ \ \ +p\frac{|\nabla\Psi|_{kl}}{|\nabla\Psi|}
-p\frac{|\nabla\Psi|_{k}|\nabla\Psi|_{l}}{|\nabla\Psi|^2}.
\end{align*}
Recall that
$$|\nabla\Psi(F,t)|=-\nabla\Psi(F,t)\cdot\xi.$$
Taking the covariant derivative above
$$|\nabla\Psi|_j=-b_{ij}((\nabla^2\Psi)e_i\cdot\xi).$$
So
\begin{align}\label{4.24}
|\nabla\Psi|_{11}&=-b_{i11}((\nabla^2\Psi)e_i\cdot\xi)
-b_{j1}b_{i1}
((\nabla^3\Psi)e_je_i\cdot\xi)\\
&\nonumber\ \ +b_{i1}((\nabla^2\Psi)\xi\cdot\xi)
-b_{i1}((\nabla^2\Psi)e_i\cdot e_1).
\end{align}
It follows that
\begin{align}\label{4.25}
\nonumber s\sum h_k\nabla_k\Lambda&=
s\sum h_k\bigg(\frac{f_k}{f}+\frac{h_k}{h}
+\frac{\phi^\prime}{\phi}h_k\bigg)
-ps\frac{h_k}{|\nabla\Psi|}
((\nabla^2\Psi)e_k\cdot\xi)b_{kk}\\
&\leq c_1s-ps\frac{h_k}{|\nabla\Psi|}
((\nabla^2\Psi)e_k\cdot\xi)b_{kk},
\end{align}
and
\begin{align}\label{4.26}
\nonumber-b^{11}\nabla_{11}\Lambda
&=-b^{11}\bigg[\frac{ff_{11}-f_1^2}{f^2}
+\frac{hh_{11}-h_1^2}{h^2}
+\frac{\phi^{\prime\prime}h_1^2+\phi^{\prime}h_{11}}{\phi}
-\frac{(\phi^{\prime})^2h_1^2}{\phi^2}\bigg]\\
\nonumber&\ \ \ -pb^{11}\frac{|\nabla\Psi|_{11}}{|\nabla\Psi|}
+pb^{11}\frac{(|\nabla\Psi|_{1})^2}{|\nabla\Psi|^2}\\
&\leq c_2b^{11}+c_3+c_4b_{11}+pb^{11}b_{i11}
\frac{(\nabla^2\Psi)e_i\cdot\xi}{|\nabla\Psi|}.
\end{align}
where $c_1, c_2, c_3$ and $c_4$ are positive constants independent of $t$.

From (\ref{4.14}) and (\ref{4.21}), we have
$$b^{11}b_{i11}=-a\frac{h_i}{h}+srr_i
=-a\frac{h_i}{h}+sh_ib_{ii}.$$
This, together with (\ref{4.26}), yields
$$-b^{11}\nabla_{11}\Lambda\leq c_2b^{11}+c_3+c_4b_{11}+c_5s\sum b_{ii}+c_6a,$$
where $c_5$ and $c_6$ are positive constants independent of $t$. Hence
\begin{align}\label{4.27}
-b^{11}\nabla_{11}\Lambda+s\sum h_k\nabla_k\Lambda
\leq \widehat{c}_1s+\widehat{c}_2a+\widehat{c}_3b^{11}
+\widehat{c}_4b_{11}
+\widehat{c}_5s\sum b_{ii}+\widehat{c}_6.
\end{align}
 Substituting (\ref{4.27}) into (\ref{4.23}), if we choose $s\gg a$, then
\begin{align*}
\frac{\partial_t\mathcal{E}}{\partial_th+\gamma h}
&\leq-a\sum b^{ii}+nh(a+s)-s\sum b_{ii}+\widehat{c}_3b^{11}
 +\widehat{c}_4b_{11}+\widehat{c}_5s\sum b_{ii}+\widehat{c}_6.
\end{align*}
Furthermore, let $a>\widehat{c}_3$, and $b^{ii}$ be large enough, then
$$\frac{\partial_t\mathcal{E}}{\partial_th+\gamma h}<0.$$
Therefore
$$\mathcal{E}(\xi_0,t)=\overline{\mathcal{E}}(\xi_0,t)\leq L_3,$$
for some positive constant $L_3$ independent of $t$. The proof is completed.
\end{proof}

As a consequence of Lemma \ref{l4.5} and Lemma \ref{l4.6}, we can obtain the following corollary.

\begin{corollary}\label{c4.7}
Under the assumptions of Lemma \ref{l4.1}. Then the principal curvature
$$L_4\leq\kappa_i(\cdot,t)\leq L_3,\ \ i=1,\cdots,n-1,$$
for $\forall(\cdot,t)\in\mathcal{S}^{n-1}\times(0,T)$. Here $L_4$ is a positive constant independent of $t$.
\end{corollary}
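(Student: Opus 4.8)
The plan is to obtain the statement by combining the two $C^2$-bounds already established, using the standard relation between the eigenvalues of the matrix $\{b_{ij}\}$ of principal radii and the principal curvatures $\kappa_i$. Recall that by Lemma \ref{l4.1} the support function is pinched, $l\le h(\cdot,t)\le L$, so as noted after Lemma \ref{l4.5} the matrix $\{b_{ij}\}=\{\nabla_{ij}h+h\delta_{ij}\}$ is positive definite and the principal curvatures are exactly the eigenvalues of its inverse $\{b^{ij}\}$. Thus it suffices to produce a uniform two-sided bound on the eigenvalues of $\{b^{ij}\}$, equivalently on the eigenvalues of $\{b_{ij}\}$.

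First I would record the upper bound $\kappa_i\le L_3$, which is precisely the content of Lemma \ref{l4.6}; nothing new is needed there. For the lower bound $\kappa_i\ge L_4$, I would argue as follows. The product of the principal radii is $\sigma_{n-1}=\det\{b_{ij}\}$, and Lemma \ref{l4.5} gives $\sigma_{n-1}(\cdot,t)\le L_2$ uniformly in $t$. On the other hand, Lemma \ref{l4.6} bounds each principal radius $\lambda_i=1/\kappa_i$ from above by $1/L_4$ once we know $\kappa_i\ge L_4$ — but of course that is what we want to prove, so instead I reverse the logic: Lemma \ref{l4.6} says each $\kappa_i\le L_3$, i.e. each principal radius $\lambda_i=1/\kappa_i\ge 1/L_3$. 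Hence for any fixed index $i$,
\begin{align*}
\lambda_i=\frac{\det\{b_{ij}\}}{\prod_{j\ne i}\lambda_j}
=\frac{\sigma_{n-1}}{\prod_{j\ne i}\lambda_j}
\le \frac{L_2}{(1/L_3)^{\,n-2}}=L_2 L_3^{\,n-2},
\end{align*}
so every principal radius is bounded above by a constant $C:=L_2L_3^{n-2}$ independent of $t$. Consequently each principal curvature satisfies $\kappa_i=1/\lambda_i\ge 1/C=:L_4>0$, which is the desired uniform positive lower bound. Together with Lemma \ref{l4.6} this yields $L_4\le\kappa_i(\cdot,t)\le L_3$ on $\mathcal{S}^{n-1}\times(0,T)$.

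There is no serious obstacle here: the corollary is a purely algebraic consequence of the already-proven upper bound on $\sigma_{n-1}$ (Lemma \ref{l4.5}) and the upper bound on the principal curvatures (Lemma \ref{l4.6}), via the elementary identity relating the determinant of $\{b_{ij}\}$ to the product of its eigenvalues. The only point requiring a word of care is the positive definiteness of $\{b_{ij}\}$, which allows us to pass freely between bounds on $\{b_{ij}\}$ and bounds on $\{b^{ij}\}=\{b_{ij}\}^{-1}$; this is guaranteed by the $C^0$-estimate of Lemma \ref{l4.1} together with the convexity of $\Omega_t$, exactly as invoked in the remark preceding this corollary. Hence the proof is short and the main work has already been done in Lemmas \ref{l4.5} and \ref{l4.6}.
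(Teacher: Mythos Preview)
Your proposal is correct and follows exactly the approach the paper intends: the paper itself gives no details beyond stating that the corollary is a consequence of Lemma~\ref{l4.5} and Lemma~\ref{l4.6}, and your argument---bounding each principal radius above via $\lambda_i=\sigma_{n-1}/\prod_{j\ne i}\lambda_j\le L_2 L_3^{\,n-2}$---is precisely the standard way to combine these two bounds.
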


\section{Proofs of main theorems}

\subsection*{Proof of Theorem \ref{t1.2}}
From the $C^2$-estimates obtained in Corollary \ref{c4.7}, we know that Equation (\ref{3.3}) is uniformly parabolic on any finite time interval and has the short time existence.
By $C^0, C^1$ and $C^2$-estimates (Lemmas \ref{l4.1}, \ref{l4.3} and Corollary \ref{c4.7}), and the Krylov's theory \cite{K}, we get the H\"{o}lder continuity of $\nabla^2h$ and $\partial_th$.
Then we get the higher order derivation estimates by the regularity theory of the uniformly parabolic equations. Therefore, we obtain the long-time existence and regularity of the solution to Equation (\ref{3.3}).
Moreover, we have
\begin{align}\label{5.01}
\|h\|_{C_{\xi,t}^{i,j}(\mathcal{S}^{2n}\times[0,T))}\leq C
\end{align}
for some $C>0$, independent of $t$, and for each pairs of nonnegative integers $i$ and $j$.

With the aid of the Arzel\`{a}-Ascoli theorem and a diagonal argument, there exists a sequence of $t$, denoted by $\{t_k\}_{k\in\mathbb{N}}\subset(0,\infty)$, and a smooth function $h(\xi)$ such that
\begin{align}\label{5.02}
\|h(\xi,t_k)-h(\xi)\|_{C^i(\mathcal{S}^{2n})}\rightarrow0
\end{align}
uniformly for any nonnegative integer $i$ as  $t_k\rightarrow\infty$.
This illustrates that $h(\xi)$ is a support function. Let $\Omega$ be a convex body determined by $h(\xi)$, we conclude that $\Omega$ is smooth and strictly convex with the origin in its interior.

In the following, we prove that Equation (\ref{1.4}) has a non-symmetric smooth solution.
From Lemma \ref{l3.1}, we see that
\begin{align}\label{5.2}
\partial_tC_p(\Omega_t)\geq0.
\end{align}
If there exists a time $\tilde{t}$ such that
$$\partial_tC_p(\Omega_t)\bigg|_{t=\tilde{t}}=0,$$
then by the equality condition in Lemma \ref{l3.1}, we have
$$f\phi(h)|\nabla\Psi(F,\tilde{t})|^p\sigma_{n-1}
=\tau,$$
for some constant $\tau>0$, that is, support function $h(\xi,\tilde{t})$ of $\Omega_{\tilde{t}}$ satisfies Equation (\ref{1.4}).

Next we verify the case of $\partial_tC_p(\Omega_t)>0$.
From the proof in Corollary \ref{c4.2}, there exists a positive constant $\mathcal{L}$ which is independent of $t$ such that
\begin{align}\label{5.2.1}
C_p(\Omega_t)\leq\mathcal{L},
\end{align}
and  $\partial_tC_p(\Omega_t)$ is uniformly continuous.
Combining (\ref{5.2}) and (\ref{5.2.1}), and applying the Fundamental Theorem of calculus, we obtain
$$\int_0^tC_p^\prime(\Omega_t)dt
=C_p(\Omega_t)-C_p(\Omega_0)
\leq C_p(\Omega_t)\leq\mathcal{L},$$
which leads to
$$\int_0^\infty C_p^\prime(\Omega_t)dt<\mathcal{L}.$$
This implies that there exists a subsequence of time $t_j\rightarrow\infty$ such that
$$\lim_{t_j\rightarrow\infty}\partial_tC_p(\Omega_{t_j})=0.$$

From the proof of Lemma \ref{l3.1}, we have
\begin{align*}
\partial_tC_p(\Omega_t)\bigg|_{t=t_j}
=(p-1)\int_{\mathcal{S}^{n-1}}
|\nabla\Psi(F,t)|^p\sigma_{n-1}\partial_thd\xi\bigg|_{t=t_j}.
\end{align*}
Passing to the limit, we have
\begin{align*}
0&=\lim_{t_j\rightarrow\infty}\partial_t
C_p(\Omega_t)\bigg|_{t=t_j}\\
&=\frac{p-1}{\int_{\mathcal{S}^{n-1}}
\frac{h_\infty}{f\phi(h_\infty)}d\xi}
\bigg[\int_{\mathcal{S}^{n-1}}fh_\infty\phi(h_\infty)
|\nabla\Psi|^{2p}\widetilde{\sigma}_{n-1}^2d\xi
\int_{\mathcal{S}^{n-1}}\frac{h_\infty}{f\phi(h_\infty)}d\xi\\
&\ \ \ -\bigg(\int_{\mathcal{S}^{n-1}}h_\infty
|\nabla\Psi|^{p}\widetilde{\sigma}_{n-1}d\xi\bigg)^2\bigg]\\
&\geq\frac{p-1}{\int_{\mathcal{S}^{n-1}}
\frac{h_\infty}{f\phi(h_\infty)}d\xi}
\bigg[\bigg(\int_{\mathcal{S}^{n-1}}h_\infty
|\nabla\Psi|^{p}\widetilde{\sigma}_{n-1}d\xi\bigg)^2
-\bigg(\int_{\mathcal{S}^{n-1}}h_\infty
|\nabla\Psi|^{p}\widetilde{\sigma}_{n-1}d\xi\bigg)^2\bigg]\\
&=0,
\end{align*}
this means that
$$f\phi(h_\infty)|\nabla\Psi|^p
\widetilde{\sigma}_{n-1}=\tau,$$
for some constant $\tau>0$, where $h_\infty$, $\widetilde{\sigma}_{n-1}$ are the support function and product of the principal curvature radii of the limit convex hypersurface $\Omega_\infty$, respectively.
The proof of Theorem \ref{t1.2} is completed.   \hfill$\square$

\subsection*{Proof of Theorem \ref{t1.4}}
Let $h_1$ and $h_2$ be two solutions of equation (\ref{1.4}).
We first prove the following fact:
\begin{align}\label{5.8}
\max\frac{h_1}{h_2}\leq1.
\end{align}
We use proof by contradiction.
Suppose (\ref{5.8}) is not true, namely $\max\frac{h_1}{h_2}>1$.
Suppose $\max\frac{h_1}{h_2}$ attains at $z_0\in\mathcal{S}^{n-1}$, then $h_1(z_0)>h_2(z_0)$.
Let $\mathcal{P}=\log\frac{h_1}{h_2}$, we have at $z_0$
$$0=\nabla\mathcal{P}=\frac{\nabla h_1}{h_1}
-\frac{\nabla h_2}{h_2},$$
and
$$0\geq\nabla^2\mathcal{P}=\frac{\nabla^2 h_1}{h_1}
-\frac{\nabla^2 h_2}{h_2}.$$

By the equation (\ref{1.4}), and the homogeneity of $p$-capacitary measure (see \cite{CNS}), one has
\begin{align*}
1&=\frac{\phi(h_2)|\nabla\Psi(\nabla h_2)|^p\det(\nabla^2h_2+h_2I)}
{\phi(h_1)|\nabla\Psi(\nabla h_1)|^p\det(\nabla^2h_1+h_1I)}\\
&=\frac{\phi(h_2)h_2^{n-p-1}\det(\frac{\nabla^2h_2}{h_2}+I)}
{\phi(h_1)h_1^{n-p-1}\det(\frac{\nabla^2h_1}{h_1}+I)}\\
&\geq\frac{\phi(h_2)h_2^{n-p-1}\det(\frac{\nabla^2h_1}{h_1}+I)}
{\phi(h_1)h_1^{n-p-1}\det(\frac{\nabla^2h_1}{h_1}+I)}\\
&=\frac{\phi(h_2)h_2^{n-p-1}}{\phi(h_1)h_1^{n-p-1}}.
\end{align*}
Let $h_2(z_0)=\delta h_1(z_0)$, then we have
$$\phi(\delta h_1)\leq\delta^{p+1-n}\phi(h_1).$$
Since $\delta\geq1$, it follows that $h_2(z_0)\geq h_1(z_0)$.  This is a contradiction. Thus (\ref{5.8}) holds.

Interchanging $h_1$ and $h_2$, (\ref{5.8}) implies
\begin{align*}
\max\frac{h_2}{h_1}\leq1.
\end{align*}
Combining it with (\ref{5.8}), we have $h_1\equiv h_2$. The proof of Theorem \ref{t1.4} is completed.
\hfill$\square$

\subsection*{Proof of Theorem \ref{t1.3}}
Let $\varphi$ be as in Theorem \ref{t1.3}, $\mu$ be a finite Borel measure on $\mathcal{S}^{n-1}$. Given a function $f: \mathcal{S}^{n-1}\rightarrow(0,\infty)$, we define the measure as
$$d\mu_{f}=\frac{1}{f}d\xi.$$
Suppose $\phi$ is smooth. By the proof of Lemma 3.7 in \cite{CL}, there exists a family of positive function $\{f_k\}\subset C^{\infty}(\mathcal{S}^{n-1})$ so that $\mu_{f_k}\rightarrow\mu$ as $k\rightarrow\infty$, weakly.

Let $\Omega_{0,k}=B$ be the unit ball in $\mathbb{R}^n$.
For a smooth, closed, and strictly convex hypersurfaces $\Omega_{t,k}$, its support function satisfies the flow (\ref{3.3}) and $h(\cdot,0)=1$.
From Theorem \ref{t1.2}, when $t\rightarrow\infty$, it can be known that the hypersurfaces $\Omega_{t,k}$  converges in $C^{\infty}$ to a smooth, closed, and strictly convex hypersurfaces $\Omega_{\infty,k}$, and satisfies
\begin{align}\label{5.3}
\phi(h_{\Omega_{\infty,k}})
d\mu_p(\Omega_{\infty,k},\cdot)
=\frac{n-p}{p-1}\frac{C_p(\Omega_{\infty,k})}
{\int_{\mathcal{S}^{n-1}}[h_{\Omega_{\infty,k}}/
\phi(h_{\Omega_{\infty,k}})]d\mu_{f_k}}d\mu_{f_k}.
\end{align}

In the following, we need to obtain the uniform upper and lower bounds for $h_{\Omega_{\infty,k}}$. Choose $v\in\mathcal{S}^{n-1}$, let $h_{\overline{v}}$ be the support function of the line segment joining $\pm v$.
It follows from Lemma 3.6 and Corollary 3.7 in \cite{HH} that there exists a constant $d>0$ such that
\begin{align}\label{5.4}
\min_{v\in\mathcal{S}^{n-1}}\|h_{\overline{v}}
\|_{\varphi,\mu_{f_k}}\geq d,
\end{align}
for all $k$. For any $v$ and $R_k$, there is $\pm R_kv\in\partial\Omega_{\infty,k}$ with maximal distance from the origin, thus we have $R_kh_{\overline{v}}(\xi)\leq h(\Omega_{\infty,k},\xi)$ for all $\xi\in\mathcal{S}^{n-1}$.

Furthermore, we define
$$\Phi_k(t)=\frac{1}{|\mu_{f_k}|}\int_{\mathcal{S}^{n-1}}
\varphi(h_{\Omega_{t,k}})d\mu_{f_k}.$$
By Lemma \ref{l3.2}, we have $\frac{d}{dt}\Phi_k(t)=0$, it follows that $\Phi_k(t)=\Phi_k(0)=\varphi(1)$. From  (\ref{2.4}), it suffices to have
$$\|h_{\Omega_{t,k}}\|_{\varphi,\mu_{f_k}}\leq1.$$
Combining (\ref{2.5}) with Lemma 4 in \cite{HLY}, one has
\begin{align}\label{5.5}
R_k\min_{v\in\mathcal{S}^{n-1}}\|h_{\overline{v}}
\|_{\varphi,\mu_{f_k}}
\leq R_k\|h_{\overline{v}}\|_{\varphi,\mu_{f_k}}
\leq\|h_{\Omega_{\infty,k}}\|_{\varphi,\mu_{f_k}}\leq1.
\end{align}
Thus the uniform upper bound of $h_{\Omega_{\infty,k}}$ can be obtained from (\ref{5.4}) and (\ref{5.5}).

By Lemma 1 in \cite{LXZ}, Lemma \ref{l3.1}, and the upper bound of $h_{\Omega_{\infty,k}}$, we get for $p\in(1,n)$,
$$S_p(\Omega_{\infty,k})
\geq\bigg(\frac{p-1}{n-p}\bigg)^{p-1}C_p(\Omega_{\infty,k})
\geq\bigg(\frac{p-1}{n-p}\bigg)^{p-1}C_p(\Omega_{0,k})=c_0,$$ where $\Omega_{0,k}$ is the unit ball $B$, and $c_0$ is a positive constant depending on $p$ and $C_p(B)$.
This means that $h_{\Omega_{\infty,k}}$
has a uniform lower bound.
Hence, we can find some positive constants $c_1,c_2$ independent of $k$, such that
$$c_1\leq h_{\Omega_{\infty,k}}\leq c_2.$$
Therefore, there are positive numbers $c$ and $C$ depending on $c_1$ and $c_2$ such that
$$c\leq\int_{\mathcal{S}^{n-1}}\frac{h_{\Omega_{\infty,k}}}
{\phi(h_{\Omega_{\infty,k}})}d\mu_{f_k}\leq C,$$
for $k$ being large enough.

By the Blaschke selection theorem, we obtain that $\Omega_{\infty,k}$ subsequently converges to a convex hypersurface $\Omega_{\infty,0}$.
Taking the limit $k\rightarrow\infty$ in (\ref{5.3}), combining the positive homogeneity and weak convergence of $p$-capacitary measure (see \cite{CNS}), we can find a convex body $\Omega$  generated by $\Omega_{\infty,0}$ such that
$$\lambda\phi(h_{\Omega})d\mu_p(\Omega,\cdot)=d\mu$$
for some positive constant $\lambda$.
Thus $\Omega$ is the desired solution. A further approximation allows us to confirm that $\phi$ is merely continuous.   \hfill$\square$

\subsection*{Acknowledgments}

The authors would like to express their heartfelt thanks to Professors X Zhang and X Cao for their helpful comments and suggestions.


\begin{thebibliography}{11}

{\small


\bibitem{AG} M. Akman, J. Gong, J. Hineman, J. Lewis and A. Vogel, {\it The Brunn-Minkowski inequality and a Minkowski problem for nonlinear capacity}, Mem. Amer. Math. Soc., {\bf 275} (2022), Paper No. 1348.

\bibitem{AC} B. Andrews, X. Z. Chen and Y. Wei, {\it Volume preserving flow and Alexandrov-Fenchel type inequalities in hyperbolic space}, J. Eur. Math. Soc., {\bf 23} (2021), 2467-2509.

\bibitem{BIS} P. Bryan,  M. N. Ivaki and J. Scheuer, {\it Orlicz-Minkowski flows}, Calc. Var. Partial Differential Equations, {\bf 60} (2021), Paper No. 41.

\bibitem{CD} Z. M. Chen and Q. Y. Dai, {\it The $L_p$ Minkowski problem for torsion}, J. Math. Anal. Appl., {\bf 488} (2020), 124060.

\bibitem{CH} C. Q. Chen, Y. Huang and Y. M. Zhao, {\it Smooth solutions to the $L_p$ dual Minkowski problem}, Math. Ann., {\bf 373} (2019), 953-976.

\bibitem{CL} H. D. Chen and Q. R. Li, {\it The $L_p$ dual Minkowski problem and related parabolic flows}, J. Funct. Anal., {\bf 281} (2021), Paper No. 109139, 65.

\bibitem{CWX} L. Chen, D. Wu and N. Xiang, {\it Smooth solutions to the Gauss image problem}, Pacific J. Math., {\bf 317} (2022), 275-295.

\bibitem{CW} K.-S. Chou and X.-J. Wang, {\it A logarithmic Gauss curvature flow and the Minkowski problem}, Ann. Inst. H. Poincar\'{e} Anal. Non Lin\'{e}aire, {\bf 17} (2000), 733-751.

\bibitem{CF} A. Colesanti and M. Fimiani, {\it The Minkowski problem for the torsional rigidity}, Indiana Univ. Math. J., {\bf 59} (2010), 1013-1039.

\bibitem{CNS} A. Colesanti, K. Nystr\"{o}m, P. Salani, J. Xiao, D. Yang and G. Y. Zhang, {\it The Hadamard variational formula and the Minkowski problem for $p$-capacity}, Adv. Math., {\bf 285} (2015), 1511-1588.

\bibitem{F} W. J. Firey, {\it Shapes of worn stones},  Mathematika, {\bf 21} (1974), 1-11.

\bibitem{FZH} Y. B. Feng, Y. P. Zhou and B. W. He, {\it The $L_p$ electrostatic $q$-capacitary Minkowski problem for general measures}, J. Math. Anal. Appl., {\bf 487} (2020), 123959.

\bibitem{G} R. J. Gardner, {\it Geometric Tomography}, Second ed., Gambridge Univ. Press, Cambridge, 2006.

\bibitem{GHW} R. J. Gardner, D. Hug and W. Weil, {\it The Orlicz-Brunn-Minkowski theory: a general framework, additions, and inequalities}, J. Differ. Geom., {\bf 97} (2014), 427-476.

\bibitem{GT} D. Gilbarg and N. Trudinger, {\it  Elliptic Partial Differential Equations of Second Order}, Reprint of the 1998 Edition, Classics in Mathematics, Springer, Berlin (2001).

\bibitem{HLY} C. Haberl, E. Lutwak, D. Yang and G. Y. Zhang, {\it The even Orlicz Minkowski problem}, Adv. Math., {\bf 224} (2010), 2485-2510.

\bibitem{HL} Y. X. Hu and H. Z. Li, {\it Geometric inequalities for hypersurfaces with nonnegative sectional curvature in hyperbolic space}, Calc. Var. Partial Differential Equations, {\bf 58} (2019), Paper No. 55.

\bibitem{HLW} Y. X. Hu, H. Z. Li and Y. Wei, {\it Locally constrained curvature flows and geometric inequalities in hyperbolic space}, Math. Ann., {\bf 382} (2022), 1425-1474.

\bibitem{HH} Q. Z. Huang and B. W. He, {\it On the Orlicz Minkowski problem for polytopes}, Discrete Comput. Geom., {\bf 48} (2012), 281-297.

\bibitem{Hu} Y. Huang, E. Lutwak, D. Yang and G. Y. Zhang, {\it Geometric measures in the dual Brunn-Minkowski theory and their associated Minkowski problems}, Acta Math., {\bf 216} (2016), 325-388.

\bibitem{HYZ} H. Hong, D. P. Ye and N. Zhang, {\it The $p$-capacitary Orlicz-Hadamard variational formula and Orlicz-Minkowski problems}, Calc. Var. Partial Differential Equations, {\bf 57} (2018), Paper No. 5.

\bibitem{I} M. N. Ivaki, {\it Deforming a hypersurface by principal radii of curvature and support function}, Cal. Var. Partial Differential Equations, {\bf 58} (2019), Paper No. 1.

\bibitem{J} D. Jerison, {\it A Minkowski problem for electrostatic capacity}, Acta Math., {\bf 176} (1996), 1-47.

\bibitem{K} N. V. Krylov, {\it Nonlinear elliptic and parabolic equations of the second order},  Mathematics and its Applications (Soviet Series), 7. D. Reidel Publishing Co., Dordrecht, 1987.

\bibitem{LN} J. Lewis and K. Nystr\"{o}m, {\it Regularity and free boundary regularity for the $p$-Laplacian in Lipschitz and $C^1$-domains}, Ann. Acad. Sci. Fenn. Math., {\bf 33} (2008), 523-548.

\bibitem{Li} A. J. Li, {\it The generalization of Minkowski problems for polytopes}, Geom. Dedicata, {\bf 168} (2014),  245-264.

\bibitem{LWX} H. Z. Li, Y. Wei and C. W. Xiong, {\it A geometric inequality on hypersurface in hyperbolic space}, Adv. Math., {\bf 253} (2014), 152-162.

\bibitem{LS} X. Y. Liu and W. M. Sheng, {\it A curvature flow to the $L_p$ Minkowski-type problem of $q$-capacity}, Adv. Nonlinear Stud., {\bf 23} (2023),  20220040.

\bibitem{LL} Y. N. Liu and J. Lu, {\it A flow method for the dual Orlicz-Minkowski problem}, Trans. Amer. Math. Soc., {\bf 373} (2020),  5833-5853.

\bibitem{LX} X. B. Lu and G. Xiong, {\it The $L_p$ Minkowski problem for the electrostatic $\mathfrak{p}$-capacity for $\mathfrak{p}\geq n$}, Indiana Univ. Math. J., {\bf 70} (2021), 1869-1901.

\bibitem{LXZ} M. Ludwig, J. Xiao and G. Y. Zhang, {\it Sharp convex Lorentz-Sobolev inequalities}, Math. Ann., {\bf 350} (2011), 169-197.

\bibitem{L} E. Lutwak, {\it The Brunn-Minkowski-Firey theory I. Mixed volumes and the Minkowski problem}, J. Differ. Geom., {\bf 38} (1993), 131-150.

\bibitem{LY} E. Lutwak, D. Yang and G. Y. Zhang, {\it Orlicz projection bodies}, Adv. Math., {\bf 223} (2010), 220-242.

\bibitem{LYZ} E. Lutwak, D. Yang and G. Y. Zhang, {\it Orlicz centroid bodies}, J. Differ. Geom., {\bf 84} (2010), 365-387.

\bibitem{S} R. Schneider, {\it Convex Bodies: The Brunn-Minkowski theory}, vol. 151, 2nd edn, Cambridge University Press, Cambridge, (2013).

\bibitem{U} J. Urbas, {\it An expansion of convex hypersurfaces},  J. Differ. Geom., {\bf 33} (1991), 91-125.

\bibitem{WH} W. Wang and R. G. He, {\it The discrete logarithmic Minkowski problem for $q$-capacity}, J. Math. Anal. Appl., {\bf 511} (2022), 126101.

\bibitem{XH} D. M. Xi, H. L. Jin and G. S. Leng, {\it The Orlicz Brunn-Minkowski inequality}, Adv. Math., {\bf 260} (2014), 350-374.

\bibitem{XX} G. Xiong, J. W. Xiong and L. Xu, {\it The $L_p$ capacitary Minkowski problem for polytopes}, J. Funct. Anal., {\bf 277} (2019), 3131-3155.

\bibitem{ZX1} D. Zou and G. Xiong, {\it  the $L_p$ Minkowski problem for the electrostatic $\mathfrak{p}$-capacity}, J. Differ. Geom., {\bf 116} (2020), 555-596.



}

\end{thebibliography}
\end{document}